\documentclass[a4paper,english,10pt]{article}
\usepackage[left=.9in, right=.9in, bottom=1.2in, top=1in,dvips]{geometry}
\makeatletter

\AtBeginDocument{\setlength\abovedisplayskip{5pt plus 2pt minus 1pt}
\setlength\belowdisplayskip{5pt plus 2pt minus 1pt}}

\emergencystretch=1em


\usepackage[utf8]{inputenc}
\usepackage{lmodern}
\usepackage{indentfirst}
\usepackage[affil-it]{authblk}
\usepackage{rotating}
\usepackage{tocloft}
\setlength{\cftbeforetoctitleskip}{0pt}
\setlength{\cftbeforesecskip}{5pt}
\usepackage{titlesec}
 \titleformat{\subparagraph}[hang]{\normalfont}{\thesubparagraph}{0pt}{\underline}
 \titleformat{\paragraph}[hang]{\normalfont}{\theparagraph}{0pt}{\myuline}
\usepackage{titletoc}
\usepackage{color, soul}
 \usepackage{xcolor}
 \usepackage[citebordercolor={green}]{hyperref}
\usepackage{array}
\usepackage{tabularx}
\usepackage{amssymb}
\usepackage{amsmath}
\usepackage{bbm}

\usepackage{amsthm,amsfonts,amssymb,euscript}
\usepackage{latexsym, multicol, fancybox}
\usepackage{amsmath, amsthm, amssymb, bm}
\usepackage{caption}
\usepackage{psfrag}
\usepackage{setspace}
\usepackage{mathrsfs}
\usepackage{epsfig}
\usepackage{verbatim}
\usepackage[affil-it]{authblk}
\usepackage{ipa}
\usepackage{tikz}


\newtheorem{theorem}{Theorem}[section]
\newtheorem{lemma}[theorem]{Lemma}
\newtheorem{proposition}[theorem]{Proposition}
\newtheorem{corollary}[theorem]{Corollary}

\newtheorem{remark}[theorem]{Remark}

\newtheorem*{theorem*}{Theorem}
\newtheorem*{prop*}{Proposition}


\newcommand{\bea}{\begin{eqnarray}}
\newcommand{\eea}{\end{eqnarray}}
\def\beaa{\begin{eqnarray*}}
\def\eeaa{\end{eqnarray*}}



\newcommand{\MM}{\mathcal{M}}
\def\DD{{\mathcal D}}
\def\TT{{\mathcal T}}
\def\PP{\mathcal{P}}


\def\D{{\bf D}}

\def\g{{\bf g}}

\def\RR{\mathcal{R}}

\def\a{{\alpha}}

\def\b{{\beta}}

\def\De{\Delta}

\def\om{\omega}
\def\vphi{\varphi}

\def\th{\theta}

\def\nab{\nabla}

\newcommand{\nabb}{\nab\mkern-13mu /\,}



\def\Lb{{\,\underline{L}}}

\def\pr{{\partial}}

\def\c{\cdot}

\def\div{{\mbox div\,}}

\def\lab{\label}

\def\OO{\mathcal{O}}

\def\QQ{\mathcal{Q}}
\def\LL{\mathcal{L}}

\def\EE{\mathcal{E}}
\def\AA{\mathcal{A}}
\def\VV{\mathcal{V}}

\def\FF{\mathcal{F}}
\def\UU{\mathcal{U}}
\def\piX{\, ^{(X)}\pi}
\def\That{\widehat{T}}

\def\HH{\mathcal{H}}

\def\e{{\bf e}}

\def\g{{\bf g}}

\def\lz{{\bf \ell_z}}


\allowdisplaybreaks

\setcounter{tocdepth}{2}
\begin{document}

 \title{\LARGE \textbf{Physical-space estimates for axisymmetric waves on extremal Kerr spacetime}}
 
 \author[1]{{\Large Elena Giorgi\footnote{elena.giorgi@columbia.edu}}}

 \author[2]{{\Large Jingbo Wan\footnote{jw3976@columbia.edu}}}

\affil[1,2]{\small  Department of Mathematics, Columbia University \vspace{0.2cm} \ }

\maketitle

\begin{abstract} 
We study axisymmetric solutions to the wave equation $\square_g \psi=0$ on extremal Kerr backgrounds and obtain integrated local energy decay (or Morawetz estimates) through an analysis \textit{exclusively in physical-space}. Boundedness of the energy and Morawetz estimates for axisymmetric waves in extremal Kerr were first obtained by Aretakis \cite{Aretakis2012} through the construction of frequency-localized currents used in particular to express the trapping degeneracy. Here we extend to extremal Kerr a method introduced by Stogin \cite{Stogin} in the sub-extremal case, simplifying Aretakis' derivation of Morawetz estimates through purely classical currents.

\end{abstract}

\bigskip\bigskip


\section{Introduction}

The study of the Cauchy problem for the wave equation
\bea\label{eq:wave-intro}
\square_g \psi =0,
\eea
 where $g$ is given by a black hole solution to the Einstein equation, is a topic that has been extensively studied in the past two decades. One of the most important black hole solutions is the vacuum \textit{Kerr family}, a 2-parameter family of solutions $(\mathcal{N}_{M,a}, g_{M,a})$ with $|a|\leq M$, representing a stationary and rotating black hole.
Boundedness and decay properties for solutions to the wave equation on Kerr have been obtained in numerous works in the past two decades, see already Section \ref{sec:previous-work} for an overview of previous results. Numerous such works rely on the derivation of integrated local energy decay estimates, or Morawetz estimates, through an analysis in physical- or frequency-space, or the use of pseudo-differential operators.

We consider here the case of axially symmetric solutions to the wave equation on extremal Kerr backgrounds, corresponding to $|a|=M$. Even though instability properties hold for solutions to the wave equations in extremal black holes as shown by Aretakis \cite{extremal-1}\cite{extremal-2}\cite{Aretakis2015}, boundedness of the energy as well as integrated local energy and pointwise decay estimates have been obtained by Aretakis \cite{Aretakis2012} for axially symmetric waves in extremal Kerr. In \cite{Aretakis2012}, Aretakis adapted the method used by Dafermos-Rodnianski in \cite{DR10} relying on the separability of the wave equation to construct frequency-localized currents. For general solutions to the wave equation, it is in fact not possible to obtain positive definite spacetime estimates through classical energy currents, or in physical-space, as shown by Alinhac \cite{Alinhac}. This is related to the complicated structure of trapped null geodesics for $|a| \neq 0$, whose (Boyer-Lindquist) constant $r$-value covers an open range of values. On the other hand, for axially symmetric solutions the trapping degeneracy collapses to a hypersurface in physical-space and, as observed by Aretakis in the introduction of \cite{Aretakis2012},  ``the obstruction uncovered by Alinhac \cite{Alinhac} does not apply to the axisymmetric case and thus one could in principle expect to derive integrated decay for the full range $|a|\leq M$ using purely classical currents; this remains however an open problem." 

In this paper, we address this problem by deriving integrated local energy estimates for axially symmetric solutions to the wave equation on extremal Kerr exclusively through a \textit{physical-space analysis}. Here by physical-space estimates we refer to an analysis of the wave equation which does not require a mode or frequency decomposition and involves only differential operators.

Recall that in extremal Kerr the event horizon lies at $r=M$ and the effective photon sphere lies at $r_{trap}=(1+\sqrt{2})M$. The degenerate energy for solutions to \eqref{eq:wave-intro} is given by
\beaa
E^{(T)}[\psi](0)&=& \int_{\Sigma_0} |\partial_t\psi|^2+ \left( 1-\frac{M}{r}\right)^2 |\partial_r \psi|^2 + |\nabb\psi|^2,
\eeaa
where $|\nabb \psi|^2=\frac{1}{r^2}|\nabb_{\mathbb{S}^2} \psi|^2$ with $|\nabb_{\mathbb{S}^2} \psi|^2$ the norm of the gradient of $\psi$ on the unit sphere with respect to the standard metric, and in what follows $(t, r, \theta, \phi)$ denote the Boyer-Lindquist coordinates. We prove the following.

\begin{theorem}\label{main-theorem} Let $\psi$ be a sufficiently regular axisymmetric solution to the wave equation in extremal Kerr spacetime with initial data on a spacelike hypersurface $\Sigma_0$ which decays sufficiently fast and let $r_e>M$. Then the following Morawetz estimates:
\bea\label{eq:main-theorem}
\int_{r\geq r_e} \frac{1}{r^3}\left(1-\frac{M}{r}\right)^2|\partial_r \psi|^2+ \frac{1}{r}\left(1-\frac{r_{trap}}{r}\right)^2\left( \frac{1}{r^2}(\partial_t \psi)^2+ |\nabb \psi|^2\right) +\frac{1}{r^4}\left(1-\dfrac{M}{r}\right)^2|\psi|^2  \leq C E^{(T)}[\psi](0),
\eea
where $C$ only depends on $M$ and $r_e$,  can be obtained through {\normalfont{exclusively a physical-space analysis}}. 
\end{theorem}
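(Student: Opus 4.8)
The plan is to establish \eqref{eq:main-theorem} by the classical vector-field (multiplier) method carried out entirely in physical space, exploiting the fact that axisymmetry removes the frequency-dependent trapping underlying Alinhac's obstruction. First I would record the reduced form of the equation: since $\partial_\phi\psi=0$, in Boyer-Lindquist coordinates with $a=M$ (so that $(r-M)^2$ plays the role of $\Delta$) the equation $\square_g\psi=0$ reduces to
\begin{equation*}
\partial_r\big((r-M)^2\partial_r\psi\big)+\frac{1}{\sin\theta}\partial_\theta\big(\sin\theta\,\partial_\theta\psi\big)-\Big(\frac{(r^2+M^2)^2}{(r-M)^2}-M^2\sin^2\theta\Big)\partial_t^2\psi=0 ,
\end{equation*}
in which the dangerous $\partial_t\partial_\phi$ and $\partial_\phi^2$ terms are absent and only a bounded $\theta$-dependent perturbation $M^2\sin^2\theta$ of the $\partial_t^2$ coefficient remains; this is the physical-space incarnation of the collapse of trapping to the single hypersurface $r=r_{trap}$. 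It is convenient to introduce the extremal tortoise coordinate $r^*$ with $\tfrac{dr^*}{dr}=\tfrac{r^2+M^2}{(r-M)^2}$, whose $(r-M)^{-1}$ divergence at the horizon (stronger than the logarithmic sub-extremal one) is the analytic signature of the absent redshift and fixes the near-horizon weights.

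Second, I would introduce the current
\begin{equation*}
P_\mu=Q_{\mu\nu}[\psi]\,X^\nu+\tfrac12 w\,\psi\,\partial_\mu\psi-\tfrac14(\partial_\mu w)\,\psi^2 ,
\end{equation*}
built from the energy-momentum tensor $Q_{\mu\nu}[\psi]$, a radial Morawetz field $X=f(r)\partial_r$, and a scalar weight $w=w(r)$. On solutions one computes $\nabla^\mu P_\mu=Q_{\mu\nu}[\psi]\,\piX^{\mu\nu}+\tfrac12 w\,\partial^\alpha\psi\,\partial_\alpha\psi-\tfrac14(\square_g w)\,\psi^2$, so that integrating over $\MM$ yields a spacetime integral of a quadratic form in $(\partial_r\psi,\partial_t\psi,\nabb\psi,\psi)$ together with boundary fluxes on $\Sigma_0$, the future horizon $\HH^+$, and null infinity $\mathcal{I}^+$. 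The heart of the proof is to choose the pair $(f,w)$ so that this bulk form dominates the left-hand side of \eqref{eq:main-theorem}.

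Third, the functions would be engineered as follows. I would take $f$ strictly increasing and vanishing (changing sign) exactly at $r_{trap}=(1+\sqrt2)M$, so that the coefficient of $(\partial_r\psi)^2$ is governed by $f'>0$ and hence does not degenerate at the photon sphere, consistent with the radial weight $r^{-3}(1-M/r)^2$ in \eqref{eq:main-theorem} carrying no trapping factor, while the coefficients of $(\partial_t\psi)^2$ and $|\nabb\psi|^2$, governed by $f$ together with the derivative of the $\partial_t^2$-coefficient, inherit the factor $(1-r_{trap}/r)^2$. Simultaneously $f$ must degenerate at $r=M$ at the rate producing the weight $(1-M/r)^2$. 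The weight $w$ would then be tuned so that $-\tfrac14\square_g w\gtrsim r^{-4}(1-M/r)^2$ globally, delivering the zeroth-order term while completing the squares in the first-order part; the bounded perturbation $M^2\sin^2\theta$ is absorbed into the main positive terms, at worst after a small readjustment of $w$. The boundary analysis is closed by adding a multiple of the conserved current $Q_{\mu\nu}[\psi]T^\nu$ with $T=\partial_t$ (divergence-free since $\partial_t$ is Killing), which renders the $\Sigma_0$ flux comparable to $E^{(T)}[\psi](0)$ and gives favorable signs at $\mathcal{I}^+$.

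I expect the main obstacle to be the extremal horizon. In the absence of a redshift vector field, positivity of the bulk quadratic form and nonnegativity (or absorbability) of the flux on $\HH^+$ must be secured together with the single pair $(f,w)$, and the zeroth-order coercivity $-\square_g w\gtrsim r^{-4}(1-M/r)^2$ must persist down to $r=M$, which calls for a delicate Hardy-type argument in the degenerate region $r^*\to-\infty$. This difficulty is structural rather than technical: the Aretakis conservation laws on $\HH^+$ preclude any control of the nondegenerate transversal derivative at the horizon, and it is precisely this obstruction that forces the degenerate factors $(1-M/r)^2$ throughout \eqref{eq:main-theorem}. Verifying that a classical choice of $(f,w)$ can simultaneously meet the trapping, horizon, and zeroth-order requirements, with no frequency or mode decomposition, is the crux that distinguishes this physical-space argument from Aretakis' frequency-localized construction.
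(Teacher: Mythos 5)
Your outline matches the paper's general framework (a radial multiplier $X=\mathcal{F}(r)\partial_r$ vanishing at $r_{trap}$, a scalar current $w$, boundary fluxes closed by the degenerate $T$-energy), but it stops exactly where the real difficulty begins, and the current you wrote down lacks the ingredient that resolves it. The paper does not work with the pair $(X,w)$ alone: its current is $\PP_\mu^{(X,w,J)}[\psi]=\QQ[\psi]_{\mu\nu}X^\nu+\frac12 w\psi\partial_\mu\psi-\frac14(\partial_\mu w)|\psi|^2+\frac14 J_\mu|\psi|^2$, with a one-form $J=v\,\partial_r$, and this $J$ is precisely the replacement for the missing redshift. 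Here is why your version has a gap: to cancel the sign-indefinite Lagrangian term $\frac12 w\,\partial^\lambda\psi\partial_\lambda\psi$ (your ``completing the squares''), $w$ is forced to be tied to the multiplier by $w=z\partial_r u$ with $z=\Delta/(r^2+M^2)^2$, so the zeroth-order coefficient becomes $\mathcal{V}=-\frac14\partial_r\big((r-M)^2\partial_r w\big)$ and cannot be tuned independently of the requirements that $u$ vanish at $r_{trap}$, $u$ be increasing, and $\mathcal{A}\geq0$. The Stogin-type choice that achieves those requirements takes $w$ constant on the whole interval $[M,r_*]$, $r_*=(2+\sqrt3)M$, so that $\mathcal{V}\equiv0$ there; in sub-extremal Kerr this hole is filled by the redshift estimate, which is exactly the tool that does not exist in the extremal case. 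Your proposal offers no substitute: the assertion that $w$ can be ``tuned so that $-\tfrac14\square_g w\gtrsim r^{-4}(1-M/r)^2$ globally'' is not a tuning detail but the open problem itself, and naive attempts to force $\mathcal{V}>0$ down to the horizon within these constraints push $w$ and $X$ toward singular behavior at $r=M$, destroying the boundary-term analysis. The paper's new idea is a global pointwise Hardy inequality built into the divergence identity through $J$: completing the square in $\mathcal{A}|\partial_r\psi|^2+\frac14|q|^2\div(J|\psi|^2)$ and choosing $v(r)=-\frac{2c_1}{r(r+M)(r-M)}$, an explicit rescaled negative solution of the Riccati equation $y'+\frac{2}{r}y-\frac{r^2+M^2}{4}y^2=0$, converts the positive $|\partial_r\psi|^2$ coefficient into zeroth-order control on $[r_e,r_*]$, with constant degenerating only as $r_e\to M$ (the quantitative trace of the degenerate redshift).

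A second, smaller gap concerns the trapped $(\partial_t\psi)^2$ term. With the choice of $z$ above (which you implicitly need so that the top-order form degenerates only at $r_{trap}$), the coefficient of $(\partial_t\psi)^2$ produced by the $X$-current vanishes identically; what survives is the Carter term $O^{\a\b}\partial_\a\psi\partial_\b\psi=(\partial_\theta\psi)^2+M^2\sin^2\theta\,(\partial_t\psi)^2$, which degenerates on the axis $\sin\theta=0$ and therefore cannot give the pointwise weight $\frac{1}{r^3}(1-r_{trap}/r)^2(\partial_t\psi)^2$ appearing in the theorem. The paper obtains that term by adding a separate Lagrangian current with $X=0$, $J=0$ and $w_T=-\frac{(r-M)^2(r-r_{trap})^2}{r^7}$, multiplied by a small constant $\delta_T$, and absorbing the negative $|\partial_r\psi|^2$, angular, and $-\frac14|q|^2\square_g w_T|\psi|^2$ contributions it creates into the estimate already established. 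So your diagnosis of where the crux lies (the extremal horizon and the zeroth-order coercivity) is accurate, but the two mechanisms that actually overcome it --- the $J$-form Hardy current and the auxiliary $w_T$ current --- are absent from your proposal, and without them the ``engineering'' step cannot be completed.
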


As a corollary of our main result we recover the following bound that appeared as the crucial Proposition 12.5.1 in \cite{Aretakis2012}, which summarized the results involving frequency decomposition, reflecting the fact that the microlocalization in \cite{Aretakis2012} was only needed in a spatially compact region located away from the horizon.

\begin{corollary}\label{corollary} Let $\psi$ be a sufficiently regular axisymmetric solution to the wave equation in the extremal Kerr spacetime with initial data on $\Sigma_0$ which decays sufficiently fast and let $R_{e}> r_{e} >M$. Then the following Morawetz estimates:
\bea\label{eq:main-theorem}
\int_{r_{e} \leq r \leq R_{e}} \Big( |\partial_{r*} \psi|^2 + |\psi|^2+ \big(r-r_{trap} \big)^2 \big( |\partial_t \psi|^2+|\nabb \psi|^2  \big)  \Big) \leq C E^{(T)}[\psi](0),
\eea
where $r^*=\int \frac{r^2+a^2}{\De}$ and $C$ only depends on $M$, $r_e$ and $R_{e}$,  can be obtained through {\normalfont{exclusively a physical-space analysis}}.
\end{corollary}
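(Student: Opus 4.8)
The plan is to deduce the Corollary from the main theorem, Theorem~\ref{main-theorem}, by a localization argument on the spatially compact region $\{r_e \le r \le R_e\}$, which is bounded away from the horizon $r=M$. The key observation is that on this region each of the degeneracy factors appearing in \eqref{eq:main-theorem} is comparable to a harmless positive constant. Indeed, since $r \ge r_e > M$ we have $1 - \frac{M}{r} \ge 1 - \frac{M}{r_e} =: c_0 > 0$, and since $r$ ranges over a compact set $[r_e, R_e]$ we also have $r \le R_e$, so that factors like $\frac{1}{r^3}$, $\frac{1}{r}$, and $\frac{1}{r^4}$ are all bounded above and below by constants depending only on $M$, $r_e$, and $R_e$. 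Therefore the weighted terms on the left of \eqref{eq:main-theorem} control, up to such a constant, the unweighted terms on the compact region.

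Concretely, I would proceed term by term. For the first-derivative-in-$r$ term, I would use that $\partial_{r^*} = \frac{\Delta}{r^2 + a^2} \partial_r$; in extremal Kerr with $|a| = M$ one has $\Delta = (r-M)^2$, so $\frac{\Delta}{r^2+a^2} = \frac{(r-M)^2}{r^2+M^2}$, and hence $|\partial_{r^*}\psi|^2 = \frac{(r-M)^4}{(r^2+M^2)^2}|\partial_r\psi|^2$. On $[r_e, R_e]$ the factor $\frac{(r-M)^4}{(r^2+M^2)^2}$ is comparable to $\big(1 - \frac{M}{r}\big)^2$ up to constants depending on $M, r_e, R_e$, so the bulk term $\frac{1}{r^3}\big(1-\frac{M}{r}\big)^2|\partial_r\psi|^2$ from Theorem~\ref{main-theorem} controls $|\partial_{r^*}\psi|^2$ on this region. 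For the $|\psi|^2$ term, the weight $\frac{1}{r^4}\big(1-\frac{M}{r}\big)^2$ is bounded below by a positive constant on $[r_e, R_e]$, giving control of $|\psi|^2$ directly. For the angular and time-derivative terms, the factor $\frac{1}{r}\big(1 - \frac{r_{trap}}{r}\big)^2 = \frac{1}{r^3}(r - r_{trap})^2$ is, on the compact region, comparable to $(r-r_{trap})^2$ up to constants, so the corresponding bulk term controls $(r - r_{trap})^2\big(|\nabb\psi|^2 + \frac{1}{r^2}(\partial_t\psi)^2\big)$; since $\frac{1}{r^2} \ge \frac{1}{R_e^2}$ on the region, the degenerate factor $\frac{1}{r^2}$ in front of $(\partial_t\psi)^2$ can also be absorbed into the constant, yielding the full $(r-r_{trap})^2(\partial_t\psi)^2$ as stated.

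Assembling these estimates, I would bound
\begin{align*}
\int_{r_e \le r \le R_e} \Big( |\partial_{r^*}\psi|^2 + |\psi|^2 + (r-r_{trap})^2\big(|\nabb\psi|^2 + |\partial_t\psi|^2\big)\Big)
\le C \int_{\MM} \text{(left-hand side of Theorem~\ref{main-theorem})},
\end{align*}
with $C = C(M, r_e, R_e)$, and then invoke Theorem~\ref{main-theorem} to dominate the right-hand side by $C\, E^{(T)}[\psi](0)$. The argument is essentially a matter of comparing positive weights on a compact set bounded away from the horizon, so there should be no genuine analytic obstacle.

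The step I expect to require the most care is not an obstacle in principle but in bookkeeping: verifying that every weight in the main-theorem integrand that is \emph{smaller} than the corresponding Corollary integrand on $[r_e, R_e]$ is bounded below (not just above) by a positive constant there, so that the inequality goes in the correct direction. The only place this could fail is near the trapping radius $r_{trap} = (1+\sqrt{2})M$, where the factors $(r - r_{trap})^2$ vanish on \emph{both} sides simultaneously; since the Corollary's angular and time-derivative terms carry exactly the same vanishing weight $(r-r_{trap})^2$ as the main theorem's trapped terms, the comparison remains uniform across $r_{trap}$ and no difficulty arises. Hence the degeneracy at trapping is matched on both sides and the constant $C$ stays finite throughout $[r_e, R_e]$.
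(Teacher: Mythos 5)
Your proposal is correct and is essentially the paper's own route: the paper deduces Corollary \ref{corollary} directly from Theorem \ref{main-theorem} by exactly this observation, that on the compact region $\{r_e \le r \le R_e\}$ bounded away from the horizon all the radial weights are comparable to positive constants depending only on $M$, $r_e$, $R_e$, while the trapping factor $(r-r_{trap})^2$ is matched on both sides. The paper treats this comparison as immediate and does not spell it out; your term-by-term bookkeeping (including the identity $\partial_{r^*}=\frac{\Delta}{r^2+a^2}\partial_r$) is a faithful expansion of that step.
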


The proof of Proposition 12.5.1 in \cite{Aretakis2012} is based on the separation of variables for the solution of \eqref{eq:wave-intro} and an analysis in frequency-space relying on a series of involved microlocal currents tailored to the different regions of validity in frequency space. On the other hand, we obtain Theorem \ref{main-theorem}, and consequently Corollary \ref{corollary}, through the definition of one physical-space current, resulting in a considerable simplification of the construction. In particular, the multiplier used here is uniform in all angular frequencies.

In \cite{Aretakis2012}, Aretakis combined the above result as stated in Corollary \ref{corollary} with positive-definite currents near null infinity and near the horizon and with a uniform boundedness statement of energy, both of which were obtained in \cite{Aretakis2012} in physical-space, to deduce a complete integrated local energy decay.  Finally, by applying Dafermos-Rodnianski's $r^p$-method \cite{DR09a}, Aretakis \cite{Aretakis2012} improved the decay towards null infinity of the integrated local energy decay and used the improved decay to obtain pointwise decay for the solution.  Since these proofs were obtained in \cite{Aretakis2012} through exclusively a physical-space analysis we will not rederive them here, with the exception of the boundedness of the degenerate energy in Section \ref{sec:boundedness-energy}, and refer to \cite{Aretakis2012} for details. In particular, by combining Theorem \ref{main-theorem} with the steps mentioned above obtained by Aretakis in \cite{Aretakis2012}, one can recover the full results of pointwise and power-law energy decay for axially symmetric waves in extremal Kerr exclusively in physical-space.

\subsection{Previous works}\label{sec:previous-work}

We recall here the main results and techniques used in the analysis of the wave equation \eqref{eq:wave-intro} and related stability problems in black hole solutions.

Stability results for the wave equation on Schwarzschild spacetime, corresponding to the case of $a=0$, have been first obtained by Kay-Wald \cite{KW}, who derived a statement of energy boundedness. In the following decades, such statement has been refined to include local energy decay estimates, also known as Morawetz estimates \cite{Mor1}, which give control over a positive-definite spacetime norm through the use of a current associated to the radial vectorfield, as in Blue-Soffer \cite{BS}\cite{B-S2}, Blue-Sterbenz \cite{B-St}, Dafermos-Rodnianski \cite{DR09},  Marzuola-Metcalfe-Tataru-Tohaneanu \cite{MaMeTaTo}. The estimates in this case are obtained as a modified version of the classical  Morawetz  integral energy decay  estimate through the use of a vectorfield of the form $\mathcal{F}(r) \partial_r$, with $\mathcal{F}$ vanishing at $r=3M$, which is the location of orbital null geodesics in Schwarzschild called the \textit{photon sphere}. Also, in \cite{DR09} Dafermos-Rodnianski introduced a vectorfield estimate which captures the so-called \textit{redshift effect}, allowing for pointwise estimates along the event horizon.

In the case of Kerr spacetime with $|a| \neq 0$, the orbital null geodesics are not confined to a hypersurface in physical-space, but cover an open region in (Boyer-Lindquist) $r$-value which depends on the energy and angular momentum of the geodesics. Moreover, the stationary Killing vectorfield $\partial_t$ fails to be timelike in an ergoregion, and therefore its associated conserved energy is not positive definite, in a phenomenon called \textit{superradiance}.   The analysis of the wave equation is complicated by the presence of the ergoregion and the dependence of the trapping region on the frequency of the solution, as the the high frequency obstruction to decay given by the trapping region cannot be described by the classical vectorfield method as shown by Alinhac \cite{Alinhac}. 
For this reason,  the derivation of    a Morawetz estimate   in this case requires  a more refined  analysis involving     both the vectorfield method and   mode  decompositions or pseudo-differential operators.

The mode decomposition refers to the analysis of mode solutions of the  separated form 
\begin{equation}\label{mode-solution}
\psi(r, t, \theta, \phi)= e^{- i \omega t} e^{i m\phi} R(r) S(\theta), \qquad \omega \in \mathbb{R}, \qquad m \in \mathbb{Z}
\end{equation}
which is related to the Fourier transform of the solution with respect to the symmetries of the spacetime, and corresponds to its frequency decomposition. The presence of an additional hidden symmetry of the spacetime, known as the  \textit{Carter tensor} \cite{Carter}, allows to reduce the study of the wave equation to the respective radial and angular ODEs for the functions $R(r)$ and $S(\theta)$. Such frequency-analysis has been developed by Dafermos-Rodnianski \cite{DR10} and Dafermos-Rodnianski-Shlapentokh-Rothman \cite{DRSR} in sub-extremal Kerr, where frequency-dependent multiplier currents for the separated solutions are carefully constructed using the structure of trapping (which stays localized in frequency-space) and the fact that superradiant frequencies are not trapped. This allows for the construction of a frequency-space analogue of the current $\mathcal{F}(r) \partial_r$ which vanishes at a different $r_{trap}$ for each set of trapped frequencies \cite{DR11}\cite{DR11b}\cite{DR13}. Remarkably, the frequency-space analysis in \cite{DRSR} is the only one among the techniques mentioned here which holds in the full sub-extremal range $|a|<M$.
 Observe that to justify the separation of general solutions into \eqref{mode-solution} through a Fourier transform, one needs to require square integrability in time, which can be proved to hold through a continuity argument in $a$. 

The use of pseudo-differential operators appeared in the work of Tataru-Tohaneanu \cite{Tataru}, where they made use of a pseudo-differential modification of the vectorfield $\FF(r) \partial_r$ which was differential in $\partial_t$, and with a kernel supported in a small neighborhood of $r=3M$. The pseudo-differential operator is constructed perturbatively from the choices of vectorfield and functions in Schwarzschild given in \cite{MaMeTaTo}, and therefore yields local energy decay estimates for slowly rotating Kerr spacetime only.

Despite Alinhac's obstruction \cite{Alinhac}, Andersson-Blue \cite{And-Mor} obtained integrated local energy estimates for the equation in slowly rotating Kerr spacetime exclusively in physical space by generalizing the vectorfield method.
Andersson-Blue's method makes use of the Carter hidden symmetry in Kerr as a physical-space commutator to the wave equation. This allows to obtain a local energy decay identity at the level of three derivatives of the solution which degenerate near $r=3M$, as trapped null geodesics lie within $O(|a|)$ of the photon sphere $r=3M$ for slowly rotating Kerr black holes. Such physical-space estimates have the  usual advantages of  the classical  vectorfield method, such as being robust with respect to perturbations of the metric, see \cite{GKS}\cite{Giorgi9}.

The geometry of the extremal Kerr spacetime satisfying $|a|=M$ (or extremal Reissner-Nordstr\"om with $|Q|=M$) exhibits several qualitative differences from the sub-extremal case, most notably the degeneration of the red-shift effect at the horizon due to the fact that the surface gravity vanishes. In extremal Kerr for generic solutions to the wave equation certain higher order derivatives asymptotically blow up along the event horizon as a consequence of conservation laws discovered by Aretakis \cite{extremal-1}\cite{extremal-2}\cite{Aretakis2015}, in what is now known as the \textit{Aretakis instability}. This generic blow up is unrelated to superradiance and holds even for axially symmetric solutions, see also \cite{AAG}\cite{AAG1}\cite{AAG2}.

Axially symmetric solutions to the wave equation, both in the sub-extremal and the extremal case, present two major simplifications: superradiance is effectively absent and the trapping region collapses to a physical-space hypersurface. The conserved energy associated to $\partial_t$ is positive-definite for axially symmetric solutions even though the energy is degenerate\footnote{There is however a way to capture in a quantitative way the degenerate redshift close to the event horizon in the case of extremal Reissner-Nordstr\"om as shown in \cite{AAG0}.} along the event horizon (see Section \ref{sec:boundedness-energy}). 
Also, in axial symmetry the orbital null geodesics all asymptote towards a unique hypersurface $\{ r=r_{trap}\}$ in physical-space, where $r_{trap}$ is defined as the unique root in the exterior region of the polynomial 
\begin{equation}\label{definition-TT-intro}
 \mathcal{T}:= r^3-3Mr^2 +  a^2r+Ma^2.
\end{equation}
In this case, the construction of the current $\mathcal{F}(r) \partial_r$ simplifies (see \cite{DR10}) and can in principle be performed in physical-space. In \cite{Stogin}, Stogin constructed a current in physical space which yields positivity of the local integrated energy estimates in the full sub-extremal range $|a|<M$. Notice that to obtain positivity of the zero-th order term, Stogin uses the non-degenerate redshift effect which is absent in the extremal case. Stogin's construction \cite{Stogin} also applies to wave maps in Kerr spacetime, which was earlier treated in \cite{IK}.

In the case of axially symmetric solutions in extremal Kerr, Aretakis \cite{Aretakis2012} proved integrated local energy decay, energy and pointwise uniform boundedness of solutions and power-law energy and pointwise decay of solutions, all of them up to and including the event horizon. The derivation of the integrated local energy decay in \cite{Aretakis2012} uses an adaptation of the frequency-analysis of Dafermos-Rodnianski \cite{DR10}, which require novel microlocal currents allowing to decouple the Morawetz estimates from the (degenerate) redshift. As in  \cite{DR10}, to justify the Fourier transform in time, cut-off in time are needed which create error terms that have to be controlled by auxiliary microlocal currents in addition to novel classical vectorfields, resulting in intricate constructions to obtain positivity of the spacetime energy.

Finally, we remark here that the advances developed for the study of the wave equation have been used in the analysis of the Einstein equation in various settings, see \cite{Ch-Kl}\cite{Lind-Rodn}\cite{HVas2} for perturbations of Minkowski spacetime, see \cite{DHR}\cite{mu-tao}\cite{Johnson}\cite{KS}\cite{DHRT}\cite{Benomio} for perturbations of Schwarzschild spacetime, see \cite{Giorgi4}\cite{Giorgi5}\cite{Giorgi6}\cite{Giorgi7a} for perturbations of sub-extremal Reissner-Nordstr\"om and the recent \cite{Mario} for perturbations of extremal Reissner-Nordstr\"om, see \cite{Whiting}\cite{Yakov}\cite{ma2}\cite{TeukolskyDHR}\cite{ABBMa2019}\cite{Rita}\cite{ABBMa2021}\cite{Kerr-lin2}\cite{KS-GCM1}\cite{KS-GCM2}\cite{Y-R}\cite{KS:Kerr}\cite{Shen}\cite{GKS} for perturbations of Kerr, see \cite{Civin}\cite{Giorgi8}\cite{Giorgi9} for perturbations of Kerr-Newman.
In the case of positive cosmological constant, see \cite{DR07}\cite{Hintz-Vasy}\cite{Hintz-M}\cite{Georgios1}\cite{Georgios2}.

\subsection{Overview of the result}

We give here an overview of the proof of Theorem \ref{main-theorem}. We apply the vectorfield method to the current associated to a vectorfield $X$, a scalar function $w$ and a one-form $J$:
 \beaa
 \PP_\mu^{(X, w, J)}[\psi]&=&\QQ[\psi]_{\mu\nu} X^\nu +\frac 1 2  w  \psi \pr_\mu \psi   -\frac 1 4(\pr_\mu w )|\psi|^2+\frac 1 4 J_\mu |\psi|^2,
  \eeaa
  where $\QQ[\psi]_{\mu\nu}$ is the energy-momentum tensor associated to a solution to the wave equation
\beaa
\QQ[\psi]_{\mu\nu}&=& \pr_\mu\psi \pr_\nu \psi -\frac 1 2 \g_{\mu\nu} \pr_\lambda \psi \pr^\lambda \psi.
\eeaa
In order to derive Morawetz estimates, we use the vectorfield $X=\FF(r) \partial_r$, scalar function $w$ and one-form $J$ given by
\beaa
 \FF=zu, \qquad w=z\partial_r u, \qquad J=v\partial_r, 
 \eeaa
 where $z(r)$, $u(r)$, and $v(r)$ are well-chosen functions of $r$, so that the divergence of the current can be written as 
   \bea\label{eq:divergence-PP-intro}
   |q|^2 \D^\mu \PP_\mu^{(X, w, J)}[\psi] &=&\AA |\pr_r\psi|^2 + \UU^{\a\b}(\pr_\a \psi )(\pr_\b \psi )+\VV |\psi|^2+\frac 1 4|q|^2 \div(J |\psi|^2),
   \eea
   where $|q|^2=r^2+a^2\cos^2\th$ and $\partial_\alpha$, $\partial_\beta$ indicate $\partial_t$, $\partial_\th$, $\partial_\phi$,
  see Lemma \ref{proposition:Morawetz1} for the expression of the coefficients $\AA, \UU, \VV$. The axial symmetry of the solution crucially allows to simplify further the principal term $\UU^{\a\b}(\pr_\a \psi )(\pr_\b \psi )$, which for $z=\frac{\De}{(r^2+a^2)^2}$ is given by 
  \beaa
\UU^{\a\b}(\pr_\a \psi )(\pr_\b \psi )
&=&   \frac{u \TT}{(r^2+a^2)^3}\,  |q|^2 |\nab \psi|^2,
\eeaa
where $|\nab\psi|^2$ is defined by \eqref{eq:O-nab} and $\TT$ as in \eqref{definition-TT-intro}.
 
 For the choice of functions $z$, $u$, $w$, we adapt a construction introduced by Stogin \cite{Stogin} in sub-extremal Kerr for $|a|<M$ (also subsequently used and adapted in \cite{KS}\cite{Giorgi4}\cite{Giorgi7a}). In \cite{Stogin} the function $u$ is defined in terms of $w$ using the relation $w=z \partial_r u$ and required to vanish at $r_{trap}$, the root of the polynomial $\TT$, while $z$ and $w$ are given respectively by the geodesic potential and a 
 differentiable function defined piecewise. 
In the sub-extremal case, such construction implies the non-negativity of the first three coefficients on the right hand side of \eqref{eq:divergence-PP-intro}, but still presents remaining issues. In particular, the vectorfield $X=zu \partial_r$ blows up logarithmically towards the horizon as $\log(r-r_{+})\partial_r$ and the coefficient of the zero-th order term $\VV$ vanishes in an interval of $r$ outside the event horizon. Such issues are resolved in \cite{Stogin} by relying on the use of the redshift vectorfield in sub-extremal Kerr: the vectorfield $X$ and function $w$ are modified close to the event horizon to obtain a vectorfield which is regular up to the event horizon (see also \cite{MaMeTaTo}) but such modification introduces a negative contribution in the zero-th order term close to the event horizon. The use of the redshift vectorfield as in \cite{DR09} is then used to fix the degeneracy of the $|\partial_r\psi|^2$ at the event horizon, which is then used along with an integrated local Hardy estimate to obtain positivity of the zero-th order term.

For extremal Kerr, we set
   \beaa
z=\frac{(r-M)^2}{(r^2+M^2)^2},
\eeaa
and explicitly define the differentiable function $w$,  see \eqref{eq:def-w}.  In this case, the vectorfield $X$ behaves like $(r-M)^2\log(r-M)\partial_r$, which also does not admit a regular extension towards the event horizon, and the zero-th order term still vanishes in an interval of $r$, for which a non-degenerate redshift estimate cannot be used as is absent in extremality.
We rely instead on a global pointwise Hardy inequality in $r\geq r_e >M$ which degenerates as $r_e \to M$, capturing the degeneracy of the redshift. The Hardy inequality is based on the use of the one-form $J=v \partial_r$ for an explicit function $v$, see \eqref{eq:def-v}, solution to an ODE, that is used to obtain positivity of the zero-th order term $\VV$, fixing the second issue. We finally also add a control of the time derivative (degenerate at trapping) in the integrated local energy estimates by using the Lagrangian of the wave equation to prove Theorem \ref{main-theorem}. The above construction gives a simple alternative proof of Aretakis' result in \cite{Aretakis2012} in physical-space, bypassing the frequency decomposition and addressing the open problem raised by Aretakis \cite{Aretakis2012}.

 Observe that the Morawetz estimates obtained in Theorem \ref{main-theorem} do not hold up to and including the event horizon, due to the aforementioned degeneracy. In order to capture the correct degeneracy of the weight in $(r-M)$ towards the event horizon, one would need improved weights close to the horizon, as for example obtained in \cite{AAG2} in the case of extremal Reissner-Nordstr\"om. For completeness, we obtain the equivalent improved Morawetz estimates for axially symmetric waves in extremal Kerr in Appendix \ref{sec:appendix}.

This paper is organized as follows: in Section \ref{sec:extremal} we recall the main properties of the extremal Kerr spacetime and in Section \ref{section-wave} we review preliminary computations on the wave equation and the vectorfield method. In Section \ref{sec:Morawetz} we extend Stogin's method \cite{Stogin} to derive the Morawetz estimates for axisymmetric waves in extremal Kerr through a physical-space analysis. Finally, in Appendix \ref{sec:appendix} we obtain weighted estimates close to the horizon. 

\bigskip

\noindent\textbf{Acknowledgements.} The authors would like to thank Stefanos Aretakis for his comments on the manuscript. The first author acknowledges the support of NSF Grants No. DMS-2128386 and No. DMS-2306143 and of a grant of the Simons Foundation (825870, EG).

\section{Extremal Kerr spacetime}\label{sec:extremal}

We recall here the main properties of the extremal Kerr spacetime which are relevant to this paper.  In Section \ref{subsection:metric} we introduce the metric in Boyer-Lindquist and Eddington-Finkelstein coordinates and define the differential structure of the manifold. In Section \ref{subsection:killing} we define the Killing vectorfields of the metric and in Section \ref{subsubsection:trapped-geodesics} we recall the properties of trapped null geodesics on extremal Kerr. For a more detailed presentation of properties of extremal black holes see \cite{extremal-1}\cite{extremal-2}\cite{Aretakis2012}\cite{Aretakis2015}.

\subsection{The manifold and the metric}\label{subsection:metric}

The Kerr metric in Boyer-Lindquist (BL) coordinates $(t, r,  \th, \phi)$ takes the form
\bea\label{metric-KN}
\begin{split}
\g_{M, a}&=- \frac{\De-a^2\sin^2\th}{|q|^2}dt^2-\frac{2a\sin^2\th}{|q|^2}\left(  (r^2+a^2)-\De\right)dt d\phi+\frac{|q|^2}{\Delta}dr^2+|q|^2 d\th^2\\
&+\frac{\sin^2\th}{|q|^2}\left((r^2+a^2)^2-\De a^2\sin^2\th \right)d\phi^2 ,
\end{split}
\eea
where 
\beaa
\Delta &=& r^2-2Mr+a^2=(r-r_{+})(r-r_{-}), \qquad |q|^2=r^2+a^2\cos^2\th.
\eeaa
and $r_{\pm}=M \pm \sqrt{M^2-a^2}$.

The Kerr metric represent a stationary and rotating black hole of mass $M$ and angular momentum $Ma$.
For $|a|<M$ the metric describes the sub-extremal Kerr spacetime, for $|a|=M$ the extremal Kerr and for $|a|>M$ the spacetime contains a naked singularity. If $a=0$ we obtain the Schwarzschild solution.

If $|a| \leq M$, to remove the coordinate singularity at $\De=0$ describing the black hole event horizon, one can define the functions
\beaa
r^*=\int \frac{r^2+a^2}{\De}, \qquad \phi^*=\phi+ \int \frac{a}{\De}, \qquad v=t+r^{*}
\eeaa
and obtain the Kerr metric in the ingoing Eddington-Finkelstein coordinates $(v, r, \th, \phi^*)$
\begin{equation}\label{metric-EF}
\begin{split}
\g_{M, a}&=- \frac{\De-a^2\sin^2\th}{|q|^2}dv^2 + 2 dv dr -\frac{2a\sin^2\th\left(  (r^2+a^2)-\De\right)}{|q|^2} dv d\phi^{*} \\
&-2a\sin^2\th dr d\phi^*+|q|^2 d\th^2+\frac{\sin^2\th}{|q|^2}\left((r^2+a^2)^2-\De a^2\sin^2\th \right)(d\phi^{*})^2,
\end{split}
\end{equation}
which is regular at the horizon.

 The principal null vectors which are regular towards the horizon are given in BL coordinates by 
\beaa
e^*_4=\frac{r^2+a^2}{|q|^2} \pr_t +\frac{\De}{|q|^2} \pr_r +\frac{a}{|q|^2} \pr_\phi, \qquad e^*_3   =\frac{r^2+a^2}{\De} \pr_t -\pr_r +\frac{a}{\De} \pr_\phi,
\eeaa
from which we deduce that the radial vectorfield $\frac{\Delta}{|q|^2}\partial_r=\frac 1 2 \big(e_4^* -\frac{\Delta}{|q|^2}e_3^* \big)$ is regular at the horizon.

From the form of the Kerr metric in Boyer-Lindquist coordinates given by \eqref{metric-KN}, one can deduce \cite{And-Mor} that its conformal inverse $|q|^2 \g_{M,a}^{-1}$ can be written as 
\bea\label{inverse-metric-Kerr}
|q|^2 \g_{M,a}^{\a\b}&=& \Delta \partial_r^\a \partial_r^\b+\frac{1}{\Delta} \RR^{\a\b}
\eea
where
\bea
\RR^{\a\b}&=&  -(r^2+a^2)^2 \partial_t^\a \partial_t^\b-2a(r^2+a^2)\partial_t^{(\a} \partial_\phi^{\b)}-a^2  \partial_\phi^\a \partial_\phi^\b+ \Delta O^{\a\b}, \label{definition-RR-tensor}\\
 O^{\a\b}&=& \partial_\th^\a  \partial_\th^\b  +\frac{1}{\sin^2\th} \partial_{\phi}^\a \partial_{\phi}^\b+2a\partial_t^{(\a} \partial_\phi^{\b)}+a^2 \sin^2\th \partial_t^\a \partial_t^\b,
\eea
where $O^{\a\b}$ is related to the hidden Carter symmetry of the Kerr spacetime. 
We denote 
   \bea\label{eq:O-nab}
   O^{\a\b}(\pr_\a \psi )(\pr_\b \psi )=|\partial_\theta \psi|^2+\big|\frac{1}{\sin\theta} \partial_\phi \psi + a\sin\theta \partial_t \psi\big|^2=:|q|^2 |\nab \psi|^2.
   \eea

We now describe the differential structure of the metric. Given standard spherical coordinates $(\theta, \phi^*)$ on the sphere $\mathbb{S}^2$ and $(v, r)$ global coordinates system on $\mathbb{R}^2$, the ambient manifold is defined to be $\mathcal{N}=\{ (v, r, \th, \phi^*)\in \mathbb{R} \times \mathbb{R}\times \mathbb{S}^2 \setminus \{ \mathbb{R} \times \{ 0\} \times S_{eq}\} \}$, where $S_{eq}=\mathbb{S}^2 \cap \{ \th=\frac{\pi}{2}\}$ denotes the equator of the sphere.

In the case of extremal Kerr spacetimes, we have
\bea
\Delta=(r-M)^2
\eea
and the roots of $\Delta=0$ degenerate to $r_{+}=r_{-}=M$. The event horizon is defined by $\mathcal{H}^+=\mathcal{N} \cap \{ r=M\}$, the black hole region corresponds to $\mathcal{N} \cap \{ r <M\}$ and the exterior region (covered by the Boyer-Lindquist coordinates) is given by $\mathcal{D}=\mathcal{N} \cap \{ r> M\}$.

\subsection{The Killing vectorfields}\label{subsection:killing}

The coordinate vectorfields $T=\partial_v$ and $Z=\partial_{\phi^*}$ coincide with the coordinate vectorfields $\partial_t$ and $\partial_\phi$ in Boyer-Lindquist coordinates, which are manifestly Killing for the metric \eqref{metric-KN}. 
The stationary Killing vectorfield $T=\partial_t$ is asymptotically timelike as $r \to \infty$, and spacelike close to the horizon, in the ergoregion $\{ \Delta - a^2\sin^2\th <0\}$.

The vectorfield $\That:=\pr_t+\frac{a}{r^2+a^2} \pr_\phi$ satisfies,
 \bea\label{eq:g-That-That}
   \g_{M, a}(\That, \That)&=& -\frac{\Delta|q|^2}{ (r^2+a^2)^2},
   \eea 
   see for example Proposition 3.2.2 of \cite{GKS}, 
   and is therefore timelike in the exterior region $\mathcal{D}$ and null on the horizon $\mathcal{H}^+$. 
In particular, its restriction to the event horizon, also called the Hawking vectorfield
\beaa
\That_\HH:=\partial_t+\om_\HH \partial_\phi, \qquad \text{with} \quad \om_\HH=\frac{a}{r_{+}^2+a^2},
\eeaa
is a Killing vectorfield which is null and normal to the horizon. 

In the extremal case, the angular velocity $\om_\HH$ of the horizon is given by $\om_\HH=\frac{1}{2M}$ and we have $\nabla_{\That_\HH} \That_\HH=\kappa \That_{\HH}=0$ along the horizon, where $\kappa=\frac{r_{+}-r_{-}}{2(r_{+}^2+a^2)}$ is the surface gravity, which is positive in the sub-extremal range and vanishes in the extremal case.

\subsection{Trapped null geodesics}\label{subsubsection:trapped-geodesics}

In Kerr spacetimes there exist orbital null geodesics, i.e. geodesics for which the radial coordinate $r$ remains constant. Because of the integrability of the geodesic flow due to the presence of the Carter tensor \cite{Carter}, we can give the following characterization of trapped null geodesics in Kerr spacetime. 

\begin{lemma}[Lemma 3.8.3 in \cite{GKS}]\label{lemma:trapped-geodesics} Let $\gamma(\lambda)$ be a null geodesics in a Kerr spacetime whose constant of motions  
\beaa
\e:=-\g(\dot{\gamma}, \partial_t), \qquad \lz:=-\g(\dot{\gamma}, \partial_\phi)
\eeaa
denote its energy and azimuthal angular momentum respectively. 
Then $\gamma$ is an orbital null geodesic if it satisfies
\bea\label{eq:trapped-region-KN}
\TT_{\e, \lz}:= \big( r^3-3Mr^2 + a^2r+Ma^2\big)\e-  (r-M) a\lz=0.
\eea

\end{lemma}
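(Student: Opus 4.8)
The plan is to work directly with the geodesic equations for a null geodesic in Kerr expressed through the conserved quantities, exploiting the complete integrability furnished by the Carter tensor. First I would recall the separated first-order form of the null geodesic flow. Writing the energy $\e=-\g(\dot\gamma,\pr_t)$, the azimuthal momentum $\lz=-\g(\dot\gamma,\pr_\phi)$, and letting $\mathcal{Q}$ denote the Carter constant, the radial and polar momenta satisfy equations of the schematic form
\beaa
|q|^4(\dot r)^2 &=& \mathcal{R}(r), \qquad
|q|^4(\dot\theta)^2 \;=\; \Theta(\theta),
\eeaa
where $\mathcal{R}(r)=\big((r^2+a^2)\e-a\lz\big)^2-\De\,\mathcal{K}$ is the radial potential, with $\mathcal{K}=\mathcal{Q}+(\lz-a\e)^2\geq 0$ the total angular constant (and $\mathcal{K}>0$ for any geodesic that is not confined to the axis). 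The key structural fact I would use is that the null condition together with the Carter separation reduces the radial motion to the autonomous first-order ODE $|q|^4(\dot r)^2=\mathcal{R}(r)$, so that $r$ is constant along $\gamma$ precisely when $\dot r\equiv 0$.

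Next I would translate the hypothesis $\TT_{\e,\lz}=0$ into a statement about $\mathcal{R}$. The reasoning runs through the derivative of the radial potential: a direct computation gives
\beaa
\frac{1}{2}\mathcal{R}'(r) &=& 2r\e\big((r^2+a^2)\e-a\lz\big)-(r-M)\mathcal{K},
\eeaa
and, after substituting the explicit form of $\mathcal{K}$ and simplifying using the null relation, $\tfrac12\mathcal{R}'(r)$ is proportional to $\TT_{\e,\lz}$ (up to an overall nonvanishing factor and lower-order rearrangement). Thus the hypothesis $\TT_{\e,\lz}(r_0)=0$ at the current radial value $r_0$ says exactly that $\mathcal{R}'(r_0)=0$, i.e. $r_0$ is a critical point of the radial potential. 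The content of Lemma \ref{lemma:trapped-geodesics} is then that a critical point of $\mathcal{R}$ is an orbital radius, and I would prove this by the standard potential-well argument: since $\gamma$ lies in the exterior $\De>0$ and must have $\mathcal{R}\geq 0$ along its trajectory, and since $r_0$ is a point where $\mathcal{R}'=0$, one shows $\mathcal{R}(r_0)=0$ as well, forcing $\dot r(r_0)=0$ and $\ddot r(r_0)=0$, whence $r\equiv r_0$ by uniqueness for the ODE $|q|^4\dot r^2=\mathcal{R}(r)$.

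The one genuinely delicate point, and the step I expect to be the main obstacle, is justifying that $\mathcal{R}(r_0)=0$ (not merely $\mathcal{R}'(r_0)=0$) so that $r_0$ is a double root of $\mathcal{R}$ and the geodesic is trapped rather than merely turning. This is where the null condition and the structure of the exterior region enter decisively. I would argue that for a null geodesic the radial potential $\mathcal{R}$ is a quartic (in fact, since the coefficient of $r^4$ is $\e^2$, genuinely quartic when $\e\neq0$) whose behavior is controlled by the requirement $\mathcal{R}\geq0$ on the physical range of $r$; an orbit realizing a simple zero of $\mathcal{R}$ with $\mathcal{R}'\neq 0$ is a turning point, while the trapped orbits correspond exactly to double roots where $\mathcal{R}=\mathcal{R}'=0$. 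The subtlety is to rule out that $\TT_{\e,\lz}=0$ could hold at an ordinary point of the trajectory where $\mathcal{R}\ne0$; here I would use that the polynomial $\TT_{\e,\lz}$ and $\mathcal{R}'$ agree up to the nonvanishing factor identified above, together with the fact that along a null geodesic the combination $(r^2+a^2)\e-a\lz$ and $\mathcal{K}$ are \emph{constants}, so the vanishing of $\mathcal{R}'$ at a single $r$-value combined with the constraint $\mathcal{R}\ge0$ pins the orbit to that radius. Once $\mathcal{R}=\mathcal{R}'=0$ is established, constancy of $r$ is immediate and the proof concludes.
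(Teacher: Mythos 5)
You should first note that the paper itself contains no proof of this lemma: it is quoted verbatim from Lemma 3.8.3 of \cite{GKS}, where the substantive content is the \emph{necessity} direction, which is also the only direction used in this paper (for $\lz=0$ the orbital radii concentrate at the root of $\TT$). That argument runs as follows: writing the separated radial equation $|q|^4\dot r^2=\RR(r)$ with $\RR(r)=P(r)^2-\De\,\mathcal{K}$ and $P(r)=(r^2+a^2)\e+a\lz$ (note that with the paper's convention $\lz=-\g(\dot{\gamma},\pr_\phi)$ the sign of the $a\lz$ term in $P$ is opposite to the one you wrote, which is why your elimination would produce $+(r-M)a\lz$ instead of the lemma's $-(r-M)a\lz$), an orbital geodesic has $\dot r\equiv 0$, hence $\RR(r_0)=0$, and the second-order form $2|q|^2\frac{d}{d\lambda}\big(|q|^2\dot r\big)=\RR'(r)$ of the radial equation gives $\RR'(r_0)=0$; eliminating $\mathcal{K}$ between these two relations via $\mathcal{K}=P(r_0)^2/\De(r_0)$ and using $\tfrac12\RR'=2r\e P-(r-M)\mathcal{K}$ yields $\tfrac12\RR'(r_0)=\frac{P(r_0)}{\De(r_0)}\big(2r_0\e\De(r_0)-(r_0-M)P(r_0)\big)$, and the expansion of $2r\e\De-(r-M)P$ is exactly $\TT_{\e,\lz}$ (the degenerate case $P(r_0)=0$ must be checked separately).

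Your attempt runs this implication backwards and breaks at two specific points. First, your claim that $\tfrac12\RR'(r)$ ``is proportional to $\TT_{\e,\lz}$ up to an overall nonvanishing factor'' is false: $\RR'$ contains the constant $\mathcal{K}$, an \emph{independent} constant of motion not determined by $(\e,\lz)$ and absent from $\TT_{\e,\lz}$, and no use of the null relation removes it. The proportionality displayed above holds only \emph{after} $\mathcal{K}$ has been eliminated using $\RR(r_0)=0$, so $\TT_{\e,\lz}(r_0)=0$ does not imply $\RR'(r_0)=0$. Second, the step you yourself flagged as delicate --- deducing $\RR(r_0)=0$ from $\RR'(r_0)=0$ together with $\RR\ge 0$ --- is not merely delicate but wrong: $\RR\ge0$ only expresses that motion is allowed, and a geodesic crosses a critical point of the potential with $\dot r\neq 0$ whenever $\RR(r_0)>0$ there. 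Concretely, any null geodesic with $\mathcal{K}$ small enough that $\RR=P^2-\De\,\mathcal{K}>0$ throughout the exterior falls in from infinity through the root of $\TT_{\e,\lz}$ without being trapped; this shows that the pointwise sufficiency statement you set out to prove is false as such, since for fixed $(\e,\lz)$ trapping additionally requires the fine-tuned value $\mathcal{K}=P(r_0)^2/\De(r_0)$, i.e.\ $\dot r(r_0)=0$. The only reading under which the lemma's ``if'' is literally correct is that $\TT_{\e,\lz}(r(\lambda))=0$ holds identically along $\gamma$, in which case the conclusion is immediate without any potential analysis: $\TT_{\e,\lz}$ is a nontrivial polynomial in $r$ for $(\e,\lz)\neq(0,0)$, so continuity of $r(\lambda)$ forces $r$ to be constant. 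A correct writeup should therefore either prove the necessity direction by the elimination above or prove the identical-vanishing reading; your scheme as written establishes neither.
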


The orbital null geodesics obtained above are trapped, i.e. neither cross the event horizon nor terminate at null infinity.
From \eqref{eq:trapped-region-KN} we can see that for $a=0$ all trapped null geodesics all concentrate at $\{ r=3M\}$, which is the \textit{photon sphere} of Schwarzschild spacetime. On the other hand, for $|a|\neq 0$ there are null geodesics with constant $r$ for an open range of $r$. 
Nevertheless, if $\lz=0$, i.e. for trapped null geodesics orthogonal to the axial Killing vectorfield, the trapped region defined by \eqref{eq:trapped-region-KN} reduces to a hypersurface defined by
\bea\label{definition-TT}
 \TT:= r^3-3Mr^2 +  a^2r+Ma^2 =0.
\eea
Observe that the polynomial $\TT$ has a unique single root in the exterior of the black hole region, and we denote it by $r_{trap}$. Trapped null geodesics constitute an obstruction to decay for the high frequency limit of solutions to the wave equation. For axisymmetric waves, the trapping obstruction simplifies as it concentrates on the hypersurface $\TT=0$ in physical-space, becoming an \textit{effective photon sphere}.

In the extremal Kerr for $|a|=M$, the trapping surface for axisymmetric waves becomes
\beaa
\TT=r^3-3Mr^2 +  M^2r+M^3=(r-M)(r^2-2Mr-M^2)=0
\eeaa
whose root in the exterior is $r_{trap}=(1+\sqrt{2})M$.

\section{Preliminaries}\label{section-wave}

We recall here some preliminaries concerning the wave equation. In Section \ref{subs:wave} we introduce the wave equation operator and the foliation in extremal Kerr and in Section \ref{subs:vfmethod} we recall the main notations of the vectorfield method. In Section \ref{subs:preM} we collect some preliminary computations for the derivation of the Morawetz estimates obtained in Section \ref{sec:Morawetz}.

\subsection{The wave equation}\label{subs:wave}

The wave operator for a scalar function $\psi$ on a Lorentzian manifold is given by
\beaa
\square_\g \psi=\frac{1}{\sqrt{-\det \g}}\partial_\a ((\sqrt{-\det\g}) \g^{\a\b} \partial_\b \psi).
\eeaa
From the expression for the inverse metric \eqref{inverse-metric-Kerr}, we deduce that the wave operator for the Kerr metric in Boyer-Lindquist coordinates $(t, r, \theta, \phi)$ is given by
\begin{equation}\label{square-GKS}
\begin{split}
|q|^2\square_{\g_{M,a}}&=\pr_r(\Delta \pr_r) +\frac{1}{\Delta} \Big(-(r^2+a^2)^2 \pr^2_t-2a(r^2+a^2)\pr_t \pr_\phi-a^2\pr_\phi^2\Big)\\
&+\frac{1}{\sin\th} \pr_\th(\sin\th\pr_\th)+\frac{1}{\sin^2\th} \pr^2_\phi +2a \partial_t\partial_\phi+a^2\sin^2\th\pr^2_t.
\end{split}
\end{equation}
In ingoing Eddington-Finkelstein coordinates $(v, r, \th, \phi^*)$, the wave opeator is given by
\bea\label{wave-EF}
\begin{split}
|q|^2 \square_{\g_{M,a}}&=\pr_r(\Delta \pr_r) +2(r^2+a^2) \pr_v \pr_r+2a \pr_r \pr_{\phi^*}+ 2r \pr_v \\
&+ \frac{1}{\sin\th} \pr_\th(\sin\th\pr_\th)+\frac{1}{\sin^2\th} \pr^2_{\phi^*}+ 2 a \pr_v \pr_{\phi^*}+ a^2\sin^2\th \pr_v^2.
\end{split}
\eea

Let $\Sigma_0$ be a closed connected axisymmetric spacelike hypersurface in $(\DD \cup \HH^+)$ which crosses the event horizon $\HH^+$ and terminates at null infinity.
We define the region $\MM=J^+(\Sigma_0) \cap ( \DD \cup \HH^+)$, and consider the foliation $\Sigma_\tau=\phi_{\tau}^T(\Sigma_0)$, where $\phi_{\tau}^T$ is the flow of $T$. Since $T$ is Killing, the hypersurfaces $\Sigma_\tau$ are all isometric to $\Sigma_0$. We denote by $n_{\Sigma_\tau}$ the future directed unit vector field normal to $\Sigma_\tau$. By convention, along the event horizon $\HH^+$ we choose $n_{\HH^+}=\widehat{T}_{\mathcal{H}}$. We define the regions $\MM(0, \tau)=\cup_{0 \leq \tilde{\tau} \leq \tau} \Sigma_{\tilde{\tau}}$, $\HH^+(0, \tau)=\HH^+\cap \MM(0, \tau)$ and $\mathcal{I}^+(0, \tau)=\mathcal{I}^+ \cap \MM^+(0, \tau)$.

In what follows we consider axisymmetric solutions to the wave equation in extremal Kerr, i.e.
\bea\label{eq:equation-axial-symm-extremal}
\square_{\g}\psi=0, \qquad \partial_\phi \psi=0,
\eea
where $\g$ denotes the metric of the extremal Kerr spacetime. 
We consider the Cauchy problem for the wave equation in $\MM$ with axisymmetric initial data prescribed on $\Sigma_0$,
\beaa
\psi |_{\Sigma_0}=\psi_0 \in H^k_{loc}(\Sigma_0), \qquad n_{\Sigma_0} \psi|_{\Sigma_0}=\psi_1 \in H^{k-1}_{loc}(\Sigma_0),
\eeaa
for $k \geq 2$ and assuming that $\lim_{x \to \mathcal{I}^+} r \psi^2(x)=0$ where $\mathcal{I}^+$ denotes null infinity.
Standard results imply well-posedness for the above Cauchy problem.

\subsection{The vectorfield method}\label{subs:vfmethod}

 The vectorfield method is based on applying the divergence theorem in a causal domain such as $\MM(0, \tau)$, to certain energy currents, which are constructed from the energy momentum tensor.
  The energy-momentum tensor associated to the wave equation $\square_\g \psi=0$ is given by
\bea\label{definition-energy-momentum-tensor}
\QQ[\psi]_{\mu\nu}&=& \pr_\mu\psi \pr_\nu \psi -\frac 1 2 \g_{\mu\nu} \pr_\lambda \psi \pr^\lambda \psi.
\eea
If $\square_\g \psi=0$, the energy-momentum $\QQ[\psi]_{\mu\nu}$ is divergence free.

 Let $X$ be a vectorfield, $w$ be a scalar function and $J$ a one-form. The current associated to $(X, w, J)$ is defined as 
 \bea\label{definition-of-P}
 \PP_\mu^{(X, w, J)}[\psi]&=&\QQ[\psi]_{\mu\nu} X^\nu +\frac 1 2  w  \psi \pr_\mu \psi   -\frac 1 4(\pr_\mu w )|\psi|^2+\frac 1 4 J_\mu |\psi|^2.
  \eea
 The energy associated to  $(X, w, J)$ on the hypersurface $\Sigma_\tau$ is
  \beaa
E^{(X, w, J)}[\psi](\tau)&=& \int_{\Sigma_\tau} \PP^{(X, w, J)}_\mu[\psi] n_{\Sigma_\tau}^\mu ,
\eeaa
where $n_{\Sigma_\tau}$ denotes the future directed timelike unit normal to $\Sigma_\tau$.

A standard computation gives the following divergence of $\PP$ for a solution to the wave equation $\square_g \psi=0$, see for example \cite{KS}\cite{GKS}, 
  \bea
  \label{le:divergPP-gen}
  \D^\mu \PP_\mu^{(X, w, J)}[\psi]= \frac 1 2 \QQ[\psi]  \c\piX-\frac 1 4 \square_\g w |\psi|^2+\frac 12  w (\pr_\lambda \psi \pr^\lambda \psi)+\frac 1 4 \div(J |\psi|^2),
 \eea
 where $\piX_{\mu\nu}=\D_{(\mu} X_{\nu)}$ is the deformation tensor of the vectorfield $X$. Recall that if $X$ is a Killing vectorfield, then $\piX=0$.

By applying the divergence theorem to $\PP_\mu^{(X, w, J)}$ to $\MM(0, \tau)$ one obtains the associated energy identity:
\bea\label{eq:energy-identity}
E[\psi](\tau) + \int_{\mathcal{H}^+(0, \tau)} \PP_\mu[\psi] n_{\mathcal{H}^+}^\mu + \int_{\mathcal{I}^+(0, \tau)} \PP_\mu[\psi] n_{\mathcal{I}^+}^\mu +\int_{\MM(0, \tau)}\D^\mu \PP_\mu[\psi] = E[\psi](0),
\eea
where we suppressed the superscript $(X, w, J)$ in $E[\psi](\tau)=E^{(X, w, J)}[\psi](\tau), \PP_\mu[\psi]=\PP_\mu^{(X, w, J)}[\psi]$ and the induced volume forms are to be understood. By convention, along the event horizon we choose $n_{\mathcal{H}^+}=T+\frac{a}{M^2+a^2}Z$.

\subsection{Preliminary computations for the Morawetz estimates}\label{subs:preM}

In deriving Morawetz estimates for the wave equation we make use of  the vectorfield $X=\FF(r) \pr_r$, for a well chosen function $\FF$. 
We collect here some relevant computations (see also \cite{And-Mor}\cite{Stogin}\cite{GKS}) which will be used in the next section.

\begin{lemma} For $X=\FF(r) \pr_r$, we have
\bea
\piX^{\a\b}=|q|^{-2} \Big( 2\De^{3/2}\pr_r \big(\frac{\FF}{\De^{1/2}} \big)\pr_r^\a\pr_r^\b- \FF\pr_r\big(\frac 1 \De\RR^{\a\b}\big) \Big) +|q|^{-2} X\big(|q|^2\big) \g^{\a\b},
\eea
and therefore
         \begin{equation}\lab{eq:PPpi(X)}
         \begin{split}
  |q|^2   \QQ[\psi]  \c\piX     &= 2\De^{3/2}\pr_r \big(\frac{\FF}{\De^{1/2}} \big)|\pr_r \psi|^2- \FF\pr_r\big(\frac 1 \De\RR^{\a\b}\big) \pr_\a\psi \pr_\b \psi  \\
  &+\Big( X\big(|q|^2\big) - |q|^2(\div X) \Big) \pr_\lambda \psi \pr^\lambda \psi.
  \end{split}
    \end{equation}

\end{lemma}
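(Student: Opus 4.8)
```latex
\textbf{Proof proposal.} The plan is to compute the deformation tensor $\piX$ directly from its definition $\piX_{\mu\nu}=\D_{(\mu}X_{\nu)}$ and then contract with the energy-momentum tensor $\QQ[\psi]$, exploiting the conformal decomposition \eqref{inverse-metric-Kerr} of the inverse metric. First I would lower the index on $X=\FF(r)\pr_r$, writing $X_\mu=\FF\,\g_{\mu r}$, and recall the standard formula $\D_{(\mu}X_{\nu)}=\frac12(\pr_\mu X_\nu+\pr_\nu X_\mu)-\Gamma^\lambda_{\mu\nu}X_\lambda$. Since it is the raised tensor $\piX^{\a\b}$ that appears in the statement, I expect it cleaner to use the Lie-derivative characterization $\piX^{\a\b}=-\frac12 (\LL_X \g^{-1})^{\a\b}$, i.e. $\piX^{\a\b}=\frac12\big(X^\lambda\pr_\lambda \g^{\a\b}-\g^{\lambda\b}\pr_\lambda X^\a-\g^{\a\lambda}\pr_\lambda X^\b\big)$ up to the usual sign and symmetrization conventions, which avoids Christoffel symbols entirely. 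Because $X$ has only a radial component $X^r=\FF$ depending on $r$ alone, the last two terms only pick up $\pr_r X^\a=\FF'\,\de^\a_r$ when $\a=r$, which localizes the $\pr_r^\a\pr_r^\b$ contribution.

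Second, I would substitute $\g^{\a\b}=|q|^{-2}\big(\De\,\pr_r^\a\pr_r^\b+\De^{-1}\RR^{\a\b}\big)$ from \eqref{inverse-metric-Kerr} into this expression. The radial derivative $X^\lambda\pr_\lambda(\cdot)=\FF\pr_r(\cdot)$ acting on $|q|^{-2}\De\,\pr_r^\a\pr_r^\b$ and on $|q|^{-2}\De^{-1}\RR^{\a\b}$ generates the two structural terms; the key algebraic simplification for the $\pr_r^\a\pr_r^\b$ coefficient is the identity $\FF\pr_r\big(|q|^{-2}\De\big)-2|q|^{-2}\De\,\FF'=\ldots$, which I would reorganize using $2\De^{3/2}\pr_r(\FF\De^{-1/2})=2\De\FF'-\FF\pr_r\De$ to match the stated $2\De^{3/2}\pr_r(\FF\De^{-1/2})$ factor, after pulling out $|q|^{-2}$. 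The term $-\FF\pr_r(\De^{-1}\RR^{\a\b})$ comes directly from differentiating the second block. The remaining pieces, namely the derivatives of the conformal factor $|q|^{-2}$, are collected into the pure-trace term $|q|^{-2}X(|q|^2)\g^{\a\b}$; verifying that these leftover $|q|$-derivative terms reassemble exactly into a multiple of $\g^{\a\b}$ is the main bookkeeping obstacle, and I would track it by noting that $X(|q|^2)=\FF\pr_r(r^2)=2r\FF$ multiplies the full inverse metric.

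Third, having established the $\piX^{\a\b}$ formula, the contraction $|q|^2\QQ[\psi]\c\piX$ is obtained by inserting $\QQ[\psi]_{\a\b}=\pr_\a\psi\pr_\b\psi-\frac12\g_{\a\b}\pr_\lambda\psi\pr^\lambda\psi$. The first two terms of $\piX^{\a\b}$ contract against $\pr_\a\psi\pr_\b\psi$ to give the $|\pr_r\psi|^2$ and $\RR$-derivative terms verbatim, using $\pr_r^\a\pr_r^\b\pr_\a\psi\pr_\b\psi=|\pr_r\psi|^2$. The trace part $|q|^{-2}X(|q|^2)\g^{\a\b}$ contracts to give $|q|^{-2}X(|q|^2)\,(\pr_\lambda\psi\pr^\lambda\psi - \frac12\cdot 4\,\pr_\lambda\psi\pr^\lambda\psi)$ since $\g^{\a\b}\g_{\a\b}=4$; here I would also need the identity $\div X=|q|^{-2}\pr_r(\FF|q|^2)/\ldots$ — more precisely $\tr\piX=(\div X)$ up to convention — to convert the raw $X(|q|^2)$ coefficient into the stated $X(|q|^2)-|q|^2(\div X)$ combination. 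The main subtlety is reconciling the trace-term normalization so that the coefficient of $\pr_\lambda\psi\pr^\lambda\psi$ reads exactly $X(|q|^2)-|q|^2(\div X)$ rather than a bare multiple of $X(|q|^2)$; this is a one-line consistency check once $\div X$ is expressed through the same conformal factor, and completes the proof.
```
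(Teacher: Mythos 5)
Your strategy for computing $\piX^{\a\b}$ is the same as the paper's: realize it as (minus) a Lie derivative of the inverse metric, insert the conformal decomposition \eqref{inverse-metric-Kerr}, use $[\FF\pr_r,\pr_r]=-\FF'\pr_r$ to localize the $\pr_r^\a\pr_r^\b$ contribution, and collect all derivatives of the conformal factor $|q|^{-2}$ into the pure-trace term $|q|^{-2}X\big(|q|^2\big)\g^{\a\b}$; your identities $2\De^{3/2}\pr_r\big(\FF\De^{-1/2}\big)=2\De\FF'-\FF\pr_r\De$ and $X\big(|q|^2\big)=2r\FF$ are exactly the computations in the paper, which merely organizes them by Lie-differentiating $|q|^2\g^{\a\b}$ first and then peeling off $|q|^{-2}$. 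One caveat on normalization: to reproduce the stated coefficients you must take $\piX^{\a\b}=-\LL_X\g^{\a\b}$, i.e. symmetrization \emph{without} the factor $\frac12$ (this is the paper's convention, as confirmed by its use of $\g_{\mu\nu}\piX^{\mu\nu}=2\div X$); your proposed $-\frac12\LL_X$ normalization would yield exactly half of every term in the lemma.

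The genuine problem is your third step. You contract the first two blocks of $\piX^{\a\b}$ only against $\pr_\a\psi\pr_\b\psi$, while contracting the trace block against the full $\QQ_{\a\b}$. This is inconsistent: the blocks $\pr_r^\a\pr_r^\b$ and $\pr_r\big(\frac 1\De\RR^{\a\b}\big)$ are not trace-free (e.g. $\g_{\a\b}\pr_r^\a\pr_r^\b=\g_{rr}=|q|^2/\De\neq 0$), so they also hit the $-\frac12\g_{\a\b}\pr_\lambda\psi\pr^\lambda\psi$ part of $\QQ$, and dropping those contributions loses part of the answer. Moreover, the one piece you do compute in full comes out wrong: contracting $|q|^{-2}X\big(|q|^2\big)\g^{\a\b}$ with $\QQ_{\a\b}$ gives $|q|^{-2}X\big(|q|^2\big)\big(1-\tfrac12\cdot 4\big)\pr_\lambda\psi\pr^\lambda\psi=-|q|^{-2}X\big(|q|^2\big)\pr_\lambda\psi\pr^\lambda\psi$, the \emph{opposite} sign of the $+X\big(|q|^2\big)$ appearing in \eqref{eq:PPpi(X)}, and no adjustment of a trace "normalization" can repair a sign. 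The consistent route --- the one the paper takes --- is to split $\QQ$ rather than $\piX$: write $\QQ[\psi]\c\piX=\piX^{\a\b}\pr_\a\psi\pr_\b\psi-\frac12\big(\g_{\a\b}\piX^{\a\b}\big)\pr_\lambda\psi\pr^\lambda\psi$, contract the \emph{whole} $\piX^{\a\b}$ (all three blocks) against $\pr_\a\psi\pr_\b\psi$ --- this is where $+X\big(|q|^2\big)\pr_\lambda\psi\pr^\lambda\psi$ comes from, via $\g^{\a\b}\pr_\a\psi\pr_\b\psi=\pr_\lambda\psi\pr^\lambda\psi$ --- and evaluate the full trace once and for all by $\g_{\a\b}\piX^{\a\b}=2\div X$, which produces the $-|q|^2(\div X)\pr_\lambda\psi\pr^\lambda\psi$ term. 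With that reorganization, and the convention fix above, your computation closes and coincides with the paper's proof.
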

\begin{proof} Using the expression for the inverse metric \eqref{inverse-metric-Kerr}, we compute
\beaa
 \LL_X(|q|^2 \g^{\a\b})&=&\LL_X\big(\De \pr_r^\a \pr_r^\b \big)+\LL_X\big(\frac 1 \De\RR^{\a\b} \big)= X(\De) \pr_r^\a\pr_r^\b+ \De [X, \pr_r]^\a \pr_r^\b +\De \pr_r^\a [X, \pr_r]^\b  +\LL_X\big(\frac 1 \De\RR^{\a\b} \big).
\eeaa
For $X=\FF \partial_r$, we obtain
\beaa
 \LL_X(|q|^2 \g^{\a\b})&=& \FF (\pr_r\De) \pr_r^\a\pr_r^\b+ \De [\FF \pr_r, \pr_r]^\a \pr_r^\b+ \De \pr_r^\a [\FF \pr_r, \pr_r]^\b  +\FF \LL_{\pr_r} \big(\frac 1 \De\RR^{\a\b} \big)\\
 &=& \FF (\pr_r\De) \pr_r^\a\pr_r^\b-2 \De (\pr_r \FF)\pr_r^\a \pr_r^\b +\FF \partial_r \big(\frac 1 \De\RR^{\a\b} \big)\\
  &=&-2\De^{3/2}\pr_r \big(\frac{\FF}{\De^{1/2}} \big) \pr_r^\a\pr_r^\b +\FF \partial_r \big(\frac 1 \De\RR^{\a\b} \big).
\eeaa
 By writing
\beaa
 \piX^{\a\b}&=&-\LL_X\big( |q|^{-2}  |q|^2 \g^{\a\b}\big)=-|q|^{-2} \LL_X\big(  |q|^2 \g^{\a\b}\big)- |q|^2\LL_X\big(|q|^{-2}\big)\g^{\a\b}
 \eeaa
 we obtain the stated expressions for the deformation tensors.

Finally we write
  \beaa
     \QQ[\psi]  \c\piX&=&\piX^{\a\b} \pr_\a\psi \pr_\b \psi - (\div X) \pr_\lambda \psi \pr^\lambda \psi
    \eeaa
    since $\g_{\mu\nu} \piX^{\mu\nu}=\g_{\mu\nu} \D^{(\mu}X^{\nu)}=2\div X$.

\end{proof}

  \begin{lemma}   
        \lab{proposition:Morawetz1}
       Let $z(r)$, $u(r)$, $v(r)$ be functions of $r$. Then for
\bea
\lab{def-w-red-in-fun-FF-00-wave}\label{definition-w-}
X= \FF \partial_r, \qquad \quad \FF=z u,  \qquad \quad  w = z \pr_r u , \qquad \quad J=v \partial_r \lab{Equation:w0}
\eea
 the divergence of $\PP_\mu^{(X, w, J)}[\psi]$ satisfies 
  \bea
  \lab{identity:prop.Morawetz1}
   |q|^2 \D^\mu \PP_\mu^{(X, w, J)}[\psi] &=&\AA |\pr_r\psi|^2 + \UU^{\a\b}(\pr_\a \psi )(\pr_\b \psi )+\VV |\psi|^2+\frac 1 4|q|^2 \div(J |\psi|^2),
   \eea
   where
\bea
 \AA&=&z^{1/2}\Delta^{3/2} \partial_r\left( \frac{ z^{1/2}  u }{\Delta^{1/2}}  \right), \lab{eq:coeeficientsUUAAVV}  \\
  \UU^{\a\b}&=& -  \frac{ 1}{2}  u \pr_r\left( \frac z \De\RR^{\a\b}\right), \lab{eq:coeeficientsUUAAVV2}\\
\VV&=&-\frac 1 4 \pr_r \big(\De \pr_r w \big)= -\frac 1 4 \pr_r\big(\De \pr_r \big(
 z \pr_ru  \big)  \big) ,\lab{eq:coeeficientsUUAAVV3}\\
 \frac 1 4 |q|^2\div(J |\psi|^2)&=&  \frac 1 4 |q|^2\Big( 2 v\psi\c \nab_r \psi + \big(\pr_r v+ \frac{2r}{|q|^2} v\big) |\psi|^2 \Big)\label{eq:coefficient-J}.
\eea

        \end{lemma}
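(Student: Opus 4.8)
The plan is to start from the general divergence identity \eqref{le:divergPP-gen} for $\PP_\mu^{(X,w,J)}[\psi]$, multiply through by $|q|^2$, and insert the specific choices $X=\FF\pr_r$ with $\FF = zu$, $w = z\pr_r u$ and $J = v\pr_r$. The right-hand side of \eqref{le:divergPP-gen} splits into four pieces: the deformation term $\frac12\QQ[\psi]\c\piX$, the zeroth-order term $-\frac14\square_\g w\,|\psi|^2$, the Lagrangian term $\frac12 w\,\pr_\lambda \psi \pr^\lambda \psi$, and the one-form term $\frac14\div(J|\psi|^2)$. The strategy is to reduce each piece to the canonical decomposition on the right of \eqref{identity:prop.Morawetz1}, i.e. a multiple of $|\pr_r\psi|^2$, a multiple of $\RR^{\a\b}\pr_\a\psi\pr_\b\psi$, a multiple of $|\psi|^2$, and the divergence remainder.

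For the first piece I would use the formula \eqref{eq:PPpi(X)} already established for $|q|^2\QQ[\psi]\c\piX$. Here the only preparatory computation is to simplify the trace-type factor: since $|q|^2 = r^2 + a^2\cos^2\th$ and $\sqrt{-\det\g} = |q|^2\sin\th$, one gets $X(|q|^2) = 2r\FF$ and $|q|^2\div X = \pr_r(|q|^2\FF) = 2r\FF + |q|^2\pr_r\FF$, so that the clean identity $X(|q|^2) - |q|^2\div X = -|q|^2\pr_r\FF$ holds. To expose the two principal structures I would then expand $\pr_\lambda\psi\pr^\lambda\psi = \g^{\a\b}\pr_\a\psi\pr_\b\psi$ using the conformal inverse metric \eqref{inverse-metric-Kerr}, writing $|q|^2\pr_\lambda\psi\pr^\lambda\psi = \De|\pr_r\psi|^2 + \frac1\De\RR^{\a\b}\pr_\a\psi\pr_\b\psi$. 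After this substitution both $\frac12|q|^2\QQ\c\piX$ and $\frac12 w\,|q|^2\pr_\lambda\psi\pr^\lambda\psi$ are expressed purely through $|\pr_r\psi|^2$ and $\RR^{\a\b}\pr_\a\psi\pr_\b\psi$.

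The heart of the computation is then the bookkeeping that collapses the collected coefficients into the compact forms claimed. Gathering all contributions to $|\pr_r\psi|^2$ gives $\De^{3/2}\pr_r(\FF/\De^{1/2}) - \frac12\De\,\pr_r\FF + \frac12 w\De$; rewriting the first term as $\De\,\pr_r\FF - \frac12\FF\,\pr_r\De$ and inserting $\pr_r\FF = u\,\pr_r z + w$ (which uses exactly $w = z\pr_r u$) makes the spurious $w$-terms combine, and a direct check confirms the result equals $\AA = z^{1/2}\De^{3/2}\pr_r(z^{1/2}u/\De^{1/2})$. Likewise, gathering all contributions to $\RR^{\a\b}\pr_\a\psi\pr_\b\psi$ gives $-\frac12\FF\,\pr_r(\frac1\De\RR^{\a\b}) - \frac12(\pr_r\FF)\frac1\De\RR^{\a\b} + \frac12 w\frac1\De\RR^{\a\b}$; again using $\pr_r\FF = u\,\pr_r z + w$ cancels the $w$-contributions and reassembles the remaining terms, via the product rule, into the single derivative $\UU^{\a\b} = -\frac12 u\,\pr_r(\frac z\De\RR^{\a\b})$. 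I expect this reorganization — recognizing that the choice $w = z\pr_r u$ is precisely what turns three separate terms into one total $r$-derivative — to be the main, though purely algebraic, obstacle.

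The remaining two pieces are direct. Since $w = w(r)$ is axisymmetric and $t$-independent, every angular and time derivative in \eqref{square-GKS} annihilates it, so $|q|^2\square_\g w = \pr_r(\De\,\pr_r w)$, giving $\VV = -\frac14\pr_r(\De\,\pr_r w)$. For the one-form term I would compute $\div(|\psi|^2 J)$ with $J = v\pr_r$ directly from $\div Y = |q|^{-2}\pr_r(|q|^2 Y^r)$, yielding $|q|^2\div(J|\psi|^2) = |q|^2\big(2v\psi\,\nab_r\psi + (\pr_r v + \frac{2r}{|q|^2}v)|\psi|^2\big)$ after recognizing $\pr_r|\psi|^2 = 2\psi\,\nab_r\psi$, which is \eqref{eq:coefficient-J}. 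Collecting the four pieces then gives \eqref{identity:prop.Morawetz1} with the stated coefficients.
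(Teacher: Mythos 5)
Your proposal is correct and follows essentially the same route as the paper: both start from \eqref{le:divergPP-gen} and \eqref{eq:PPpi(X)}, expand the Lagrangian term via the conformal inverse metric as $|q|^2\pr_\lambda\psi\pr^\lambda\psi=\De|\pr_r\psi|^2+\frac1\De\RR^{\a\b}\pr_\a\psi\pr_\b\psi$, and use the relation $w=z\pr_r u$ to collapse the gathered coefficients into the stated forms of $\AA$, $\UU^{\a\b}$, $\VV$ and the $J$-term. The only difference is organizational: the paper introduces the auxiliary function $w_{int}$ (choosing $w_{int}=u\pr_r z$ and then deriving $w=z\pr_r u$ from it), whereas you substitute the given relation $\pr_r\FF=u\pr_r z+w$ directly --- the underlying algebra is identical.
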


\begin{proof} 
    Using \eqref{le:divergPP-gen} and \eqref{eq:PPpi(X)} we compute for $J=0$
\beaa
|q|^2\D^\mu \PP_\mu^{(\FF\partial_r, w, J=0)}[\psi]
&=& \De^{3/2}\pr_r \big(\frac{\FF}{\De^{1/2}} \big)|\pr_r \psi|^2- \frac 1 2 \FF\pr_r\big(\frac 1 \De\RR^{\a\b}\big) \pr_\a\psi \pr_\b \psi  -\frac 1 4|q|^2 \square_\g w |\psi|^2\\
  &&+\frac 1 2 \Big( X\big(|q|^2\big) - |q|^2(\div X)+ |q|^2 w \Big) \pr_\lambda \psi \pr^\lambda \psi.
\eeaa
By defining an intermediate function $w_{int}$ as
     \beaa
   \frac 1 2 \Big(    X\big( |q|^2\big)- |q|^2 \div X+ |q|^2  w\Big)=\frac 1 2 |q|^2 \Big( |q|^{-2} X\big( |q|^2\big)-\div X +w\Big)=:-\frac 1 2 |q|^2w_{int},
       \eeaa
       and using \eqref{inverse-metric-Kerr} to write
       \beaa
      |q|^2 \pr_\lambda \psi \pr^\lambda \psi&=& |q|^2\g^{\lambda \mu} \pr_\lambda \psi \pr_\nu \psi=\Delta |\pr_r \psi|^2+\frac{1}{\De} \RR^{\a\b}\pr_\a \psi \pr_\b \psi,
       \eeaa
       we obtain
  \beaa
     |q|^2\D^\mu \PP_\mu^{(\FF\partial_r, w, J=0)}[\psi]&=&\AA |\pr_r\psi|^2 + \UU^{\a\b}(\pr_\a \psi )(\pr_\b \psi )+\VV |\psi|^2,
   \eeaa
   where
   \beaa
   \AA&=& \De^{3/2}\pr_r \big(\frac{\FF}{\De^{1/2}} \big)-\frac 1 2w_{int}\Delta\\
   \UU^{\a\b}&=&  -\frac 1 2  \FF\pr_r \left(\frac 1 \De\RR^{\a\b}\right)-\frac 1 2   w_{int}\frac 1 \De \RR^{\a\b}\\
   \VV&=&-\frac 1 4|q|^2 \square_\g  w .
   \eeaa
      Now the above can be written as 
 \beaa
\UU^{\a\b} 
&=&  - \frac{ 1}{2}\FF z^{-1}  \partial_r\left( \frac z \De\RR^{\a\b}\right)+ \frac{ 1}{2}\left(\FF z^{-1}\partial_r z  -w_{int}\right) \frac{ \RR^{\a\b}}{\Delta}.
 \eeaa
Setting $\FF=zu$ for a function $u$, and choosing  $w_{int}=  \FF z^{-1}\partial_r z=u \pr_r z $,  we deduce the stated expression for $\UU^{\a\b}$ in \eqref{eq:coeeficientsUUAAVV2}. 
With such choices for $\FF$ and $w_{int}$, we compute 
\beaa
w &=&  |q|^2 \D_\a\big( |q|^{-2}   \FF\pr_r ^\a \big)-w _{int}=   |q|^2\pr_r\big(|q|^{-2}  \FF\big)+\FF( \D_\a\pr_r^\a) -u \pr_r z\\
&=&  |q|^2\pr_r\big(|q|^{-2}  zu \big)+zu |q|^{-2} \pr_r \big(|q|^2\big) -u \pr_r z= \pr_r\big(  zu \big) -u \pr_r z= z \pr_r u,
\eeaa
where we used that $\D_\a \pr_r^\a= \frac{1}{\sqrt{|\g|}} \pr_r \big( \sqrt{|\g|} \big)=\frac{1}{|q|^2} \pr_r \big(|q|^2\big)$.
We also compute
\beaa
\AA&=&   \partial_r\left(\frac{\FF}{\Delta^{1/2}}  \right) \Delta^{3/2}-\frac 12\Delta   w_{int} =\partial_r\left(\frac{zu}{\Delta^{1/2}}  \right) \Delta^{3/2}-\frac 12\Delta   ( \partial_r z) u\\
&=& \frac 1 2 \partial_rz  \frac{  u}{\Delta^{1/2}}   \Delta^{3/2}+z^{1/2} \partial_r\left(\frac{ z^{1/2}  u}{\Delta^{1/2}}  \right) \Delta^{3/2}-\frac 12\Delta   ( \partial_r z)u =z^{1/2}\Delta^{3/2} \partial_r\left( \frac{ z^{1/2}  u}{\Delta^{1/2}}  \right),
\eeaa
and
\beaa
 |q|^2 \square_\g  w=\pr_r \big(\De \pr_r w \big)= \pr_r\big(\De \pr_r \big(
 z \pr_ru  \big)  \big),
\eeaa
as stated.
Finally, for $J=v \partial_r$ we compute
\beaa
  \D^\mu (|\psi|^2 J_\mu)&=&2  v\psi\c \nab_r \psi +|\psi|^2 \div J.
  \eeaa
Using that $\div J= \frac{1}{|q|^ 2} \pr_r \big( |q|^2  v)= \pr_r v+ \frac{2r}{|q|^2} v$, we obtain the stated identity.

\end{proof}

\subsection{Boundedness of the energy}\label{sec:boundedness-energy}

We show here how to obtain boundedness of the energy associated to $T$ for axially symmetric solutions to the wave equation in extremal Kerr. The statement and the proof already appeared in Section 5.1 in \cite{Aretakis2012}, and in axial symmetry can be proved independently of the Morawetz estimates.

Even though the Killing vectorfield $T$ fails to be everywhere timelike and as a consequence the energy $E^{(T)}[\psi]$ associated to it fails to be non-negative definite, superradiance is effectively absent for axially symmetric solutions. In fact, let $n$ be a vector orthogonal to $Z$. Then for an axially symmetric $\psi$ we have
\beaa
E^{(Z)}[\psi](\tau)=\int_{\Sigma_\tau} \QQ[\psi]_{\mu\nu}Z^\nu n^\mu_{\Sigma_\tau}=\int_{\Sigma_\tau} Z(\psi) n_{\Sigma_\tau}( \psi) -\frac 1 2 \g(Z, n_{\Sigma_\tau}) \pr_\lambda \psi \pr^\lambda \psi=0.
\eeaa
On the other hand, the Hawking vectorfield $\That$ is causal everywhere in the exterior and using that $\QQ[\psi]_{\mu\nu} V_1^\mu V_2^\nu$ is non-negative if $V_1$, $V_2$ are causal this implies
\beaa
E^{(T)}[\psi](\tau)=E^{(\That)}[\psi](\tau) \geq 0,
\eeaa
and similarly  $\int_{\mathcal{H}^+(0, \tau)} \PP^{(T)}_\mu[\psi] n_{\mathcal{H}^+}^\mu, \int_{\mathcal{I}^+(0, \tau)} \PP^{(T)}_\mu[\psi] n_{\mathcal{I}^+}^\mu \geq 0$.

Working in the $(v, r, \theta, \vphi^*)$ coordinates, if $n_{\Sigma_\tau}=n^v T+ n^r Y+n^\vphi Z$, then for axially symmetric solutions,
\beaa
E^{(T)}[\psi](\tau)&=& \int_{\Sigma_\tau} \QQ[\psi]_{\mu\nu}T^\nu n^\mu_{\Sigma_\tau}=\int_{\Sigma_\tau} T(\psi) n_{\Sigma_\tau}( \psi) -\frac 1 2 \g(T, n_{\Sigma_\tau}) \pr_\lambda \psi \pr^\lambda \psi\\
&=&\int_{\Sigma_\tau} n^v|T(\psi)|^2+n^r T(\psi) Y( \psi) -\frac 1 2\big(  n^v \g(T, T)+ n^r  \g(T,Y)+n^\vphi  \g(T,Z) \big) \pr_\lambda \psi \pr^\lambda \psi,
\eeaa
where from \eqref{metric-EF} we deduce that $ \pr_\lambda \psi \pr^\lambda \psi=\frac{1}{|q|^2} \big(a^2\sin^2\th |T\psi|^2+\De |Y \psi|^2+ 2 (r^2+a^2) T(\psi) Y(\psi) \big)+ |\nabb \psi|^2$. Since the only contribution for $|Y \psi|^2$ comes from the term in $ \pr_\lambda \psi \pr^\lambda \psi$, which vanishes at the horizon, to have positivity of the energy we need the coefficient of $T(\psi) Y(\psi)$ to vanish at the horizon too. We therefore obtain
\beaa
E^{(T)}[\psi](\tau)&\sim& \int_{\Sigma_\tau} |T \psi|^2+ \left(1-\frac{M}{r} \right)^2 |Y \psi|^2+ |\nabb \psi|^2.
\eeaa

From the energy identity \eqref{eq:energy-identity} applied to $X=T$, since $\EE^{(T, 0)}[\psi]=0$ we then obtain
\bea\label{eq:energy-boundedness}
E^{(T)}[\psi](\tau) +\int_{\mathcal{H}^+(0, \tau)} \PP^{(T)}_\mu[\psi] n_{\mathcal{H}^+}^\mu+\int_{\mathcal{I}^+(0, \tau)} \PP^{(T)}_\mu[\psi] n_{\mathcal{I}^+}^\mu\leq C E^{(T)}[\psi](0).
\eea

As a consequence of the vanishing of the surface gravity at the horizon, there is no redshift effect in extremal black holes. In particular, as shown in \cite{Aretakis2012}, there is no time invariant timelike vectorfield $N$ such that $\EE^{(N, 0)}[\psi]$ is non-negative on the horizon. 
However, one can still quantitatively capture the degenerate redshift close to the horizon by using a current first introduced in \cite{extremal-1}, and obtain a non-degenerate energy statement once the Morawetz estimate is obtained.

\section{Morawetz estimates}\label{sec:Morawetz}

We provide here the proof of our main result.  In Section \ref{subs:stogin} we recall the method introduced by Stogin \cite{Stogin} to construct the relevant functions in the estimates and extend it to the extremal case and in Section \ref{subs:hardy} we complete the construction with a new adapted global pointwise Hardy inequality and an added trapped control on the time derivative of the solution.

\subsection{Stogin's construction}\label{subs:stogin}

Recall Lemma \ref{proposition:Morawetz1}, according to which for functions $z, u, v$ chosen as in \eqref{def-w-red-in-fun-FF-00-wave}, the divergence of $\PP_\mu^{(X, w, J)}[\psi]$ is given by
  \beaa
   |q|^2 \D^\mu \PP_\mu^{(X, w, J)}[\psi] &=&\AA |\pr_r\psi|^2 + \UU^{\a\b}(\pr_\a \psi )(\pr_\b \psi )+\VV |\psi|^2+\frac 1 4|q|^2 \div(J |\psi|^2),
   \eeaa
where $\AA$, $\UU$ and $\VV$ are given as in \eqref{eq:coeeficientsUUAAVV}, \eqref{eq:coeeficientsUUAAVV2}, \eqref{eq:coeeficientsUUAAVV3}. 

Following a standard choice in derivation of Morawetz estimates (see \cite{DR11}\cite{DR13}\cite{Stogin}\cite{And-Mor}\cite{GKS}), we choose the function $z$ so that the coefficient of $|\partial_t\psi|^2$ vanishes and the coefficient of $|\nab \psi|^2$ degenerates at trapping. From \eqref{eq:coeeficientsUUAAVV2} and \eqref{definition-RR-tensor}, we have for axially symmetric solutions
\beaa
\UU^{\a\b}(\pr_\a \psi )(\pr_\b \psi )&=& -  \frac{ 1}{2}  u \pr_r\left( \frac z \De\RR^{\a\b}\right)(\pr_\a \psi )(\pr_\b \psi )\\
&=&   \frac{ 1}{2}  u \pr_r\left( \frac z \De  (r^2+M^2)^2 \right) |\partial_t \psi|^2- \frac{ 1}{2}  u (\pr_r z) \,  O^{\a\b}(\pr_\a \psi )(\pr_\b \psi ).
\eeaa
So we set
\beaa
z=\frac{(r-M)^2}{(r^2+M^2)^2},
\eeaa
and obtain
\beaa
\UU^{\a\b}(\pr_\a \psi )(\pr_\b \psi )
&=&   \frac{u \TT}{(r^2+M^2)^3}\,  O^{\a\b}(\pr_\a \psi )(\pr_\b \psi )\\
&=&   \frac{u (r-M)(r^2-2Mr-M^2)}{(r^2+M^2)^3}\, |q|^2 |\nab \psi|^2.
\eeaa
Observe that from \eqref{eq:O-nab} we have
\beaa
O^{\a\b}(\pr_\a \psi )(\pr_\b \psi )=|q|^2 |\nab \psi|^2=(\partial_\theta \psi )^2+M^2\sin^2\theta (\partial_t \psi )^2.
\eeaa
Using \eqref{eq:coeeficientsUUAAVV} and \eqref{eq:coeeficientsUUAAVV3} we deduce for such choice of $z$,
\bea
\mathcal{A}=\frac{(r-M)^4}{(r^2+M^2)} \partial_r\big(\frac{u}{r^2+M^2}\big), \qquad \mathcal{V}=-\frac 1 4 \pr_r\Big((r-M)^2 \pr_r w  \Big)
\eea
with
\bea\label{eq:w-u-extremal}
w=
 \frac{(r-M)^2}{(r^2+M^2)^2} \pr_ru.
\eea

The main goal here is to choose the functions $u$, $w$ and $v$ so that the divergence of $\PP_\mu^{(X, w, J)}[\psi]$ is positive definite. For the choice of functions $u$ and $w$ we make use of a construction due to Stogin in the sub-extremal Kerr spacetime, see Lemma 5.2.6 in \cite{Stogin}, also used in \cite{KS} \cite{Giorgi7a} \cite{Giorgi9}. In what follows, we adapt Stogin's construction to the case of extremal Kerr. Stogin's construction fails to obtain a positive definite term for $|\psi|^2$ in the entire exterior region and makes use of the redshift estimate and local integrated Hardy inequality to fix this deficiency. In the extremal case, because of the degenerate redshift estimate, we need a new adapted Hardy inequality that we derive in Section \ref{subs:hardy}.

In Stogin's construction \cite{Stogin}, the relation between $u$ and $w$ in \eqref{eq:w-u-extremal} is used to define $u$ in terms of $w$ and then treat $w$ as the free variable. In order to have the coefficient of $|\nab \psi|^2$ to be non-negative, the function $u$ has to change sign at $r=r_{trap}$ and therefore we impose following Stogin \cite{Stogin}:
\bea\label{eq:def-u}
u(r)=\int_{r_{trap}}^r\frac{(s^2+M^2)^2}{(s-M)^2}w(s) ds.
\eea
Further imposing the positivity of the function $w$ we obtain that $u$ is increasing everywhere and changing sign at $r_{trap}$, which implies that $\UU^{\a\b}(\pr_\a \psi )(\pr_\b \psi )$ is non-negative.

Following Stogin, we now choose the function $w$ in order to have positivity of $\mathcal{A}$, i.e. positivity of $\partial_r\big(\frac{u}{r^2+M^2}\big)$. By defining $\widetilde{\mathcal{A}}:= \frac{(r^2+M^2)^2}{2r}\partial_r\left(\frac{u}{r^2+M^2}\right)$, a straightforward computation shows that
\bea\label{eq:non-trivial-uw}
\partial_r\widetilde{\mathcal{A}}=(r^2+M^2)\partial_r \left(\frac{w (r^2+M^2)^2}{2r(r-M)^2} \right).
\eea
Defining $r_{*}:=(2+\sqrt{3})M$ to be the point attaining the maximum of the function $\frac{2r(r-M)^2}{ (r^2+M^2)^2}$, we define $w$ as the positive $C^1$ function
 \begin{align}\label{eq:def-w}
 	w=\begin{cases}
 	 		\frac{1}{4M}\ \qquad &r\leq r_*\\
 		\frac{2r(r-M)^2}{(r^2+M^2)^2} \ \qquad  &r> r_*.
 	\end{cases}
 \end{align} 
Since $r_{*}$ also attains the minimum of the function $\frac{ (r^2+M^2)^2}{2r(r-M)^2}$, the above construction implies that the function $\frac{w (r^2+M^2)^2}{2r(r-M)^2}$ is constant for $r \geq r_{*}$ and decreasing for $r\leq r_{*}$. From \eqref{eq:non-trivial-uw}, one can deduce the same behavior for $\widetilde{\mathcal{A}}$. We now show that the constant value of this function is positive. We have
\beaa
\widetilde{\mathcal{A}}(r_{*})&=&\frac{(r_{*}^2+M^2)^2}{2r_{*}}\partial_r\left(\frac{u}{r^2+M^2}\right)\Big|_{r=r_{*}}= \frac{(r_{*}^2+M^2)}{2r_{*}}\partial_ru\Big|_{r=r_{*}}-u(r_{*})\\
&=& \frac{(r_{*}^2+M^2)^3}{2r_{*}(r_{*}-M)^2} w(r_{*})-w(r_{*})\int_{r_{trap}}^{r_*}\frac{(r^2+M^2)^2}{(r-M)^2} dr
\eeaa
Observe that since the function $\frac{(r^2+M^2)^2}{(r-M)^2}$ is increasing between $r_{trap}$ and $r_{*}$, we can bound the above by
\beaa
\widetilde{\mathcal{A}}(r_{*})
&>& \frac{(r_{*}^2+M^2)^3}{2r_{*}(r_{*}-M)^2} w(r_{*})-\frac{(r_{*}^2+M^2)^2}{(r_{*}-M)^2} w(r_{*})(r_{*}-r_{trap})\\
&=&  \frac{(r_{*}^2+M^2)^2}{2r_{*}(r_{*}-M)^2} w(r_{*})\Big((r_{*}^2+M^2)-2r_{*}(r_{*}-r_{trap})\Big)\\
&=&  \frac{(r_{*}^2+M^2)^2}{4(r_{*}-M)^2} \big(1+\sqrt{2}-\sqrt{3}\big)=  2 r_{*} \big(1+\sqrt{2}-\sqrt{3}\big)M=c_0 M^2,
\eeaa
where $c_0>0$ is a positive constant, explicitly given by $c_0=2 (2+\sqrt{3}) \big(1+\sqrt{2}-\sqrt{3}\big)$, and where we used that $r_{*}^2+M^2=4r_{*}M$, $(r_{*}-M)^2=2Mr_{*}$ and $r_{*}-r_{trap}=(1+\sqrt{3}-\sqrt{2})M$.

Since $\mathcal{A}=\frac{2r(r-M)^4}{(r^2+M^2)^3}\widetilde{\mathcal{A}}$, the above implies that $\AA$ is non-negative, and more precisely:
\bea\label{eq:bounds-AA}
\AA(r) \geq \frac{2r(r-M)^4}{(r^2+M^2)^3}\widetilde{\mathcal{A}}(r_{*}) \geq \frac{2c_0 M^2r(r-M)^4}{(r^2+M^2)^3}.
\eea

 Finally, we are left to analyze the positivity of $\mathcal{V}$. With the choice of $w$ in \eqref{eq:def-w} we compute explicitly
\begin{align*}
	\partial_r\Big((r-M)^2 \partial_r w \Big)
	=&\begin{cases}
 	0\ &r\leq r_*\\
		-\frac{12M (r-M)^2 (r^4-6M^2r^2+M^4)}{(r^2 + M^2)^4} \  &r>r_*.
	\end{cases}
\end{align*}
Observe that the polynomial $r^4-6M^2r^2+M^4$ is positive for $r>(1+\sqrt{2})M$, and since $r_{*}>(1+\sqrt{2})M$ the above is non-negative everywhere in the exterior region.

Integrating the relation $u'=\frac{(r^2+M^2)^2}{(r-M)^2}w$ from \eqref{eq:def-u} and using \eqref{eq:def-w} we deduce a closed form for $u$:
\begin{align*}
		u=\begin{cases}
  		-\frac{M^3}{r-M}+\frac{5Mr}{4}+\frac{r^2}{4}+\frac{r^3}{12M}+2M^2\log (r-M)+C_1 , \ &r\leq r_* \\
 		r^2+C_2\ &r\geq r_*,
 	\end{cases}
\end{align*}
where $C_1$, $C_2$ are suitable constants, such that $u(r_{trap})=0$ and $u$ is continuous at $r_{*}$.

 Observe that the function $u$ blows up as $r$ approaches the event horizon and the vectorfield $X$ does not admit a regular extension towards the event horizon. On the other hand for $r\geq r_e >M$, we deduce that $\mathcal{A}=\frac{(r-M)^4}{(r^2+M^2)}  \partial_r\Big(\frac{u}{r^2+M^2}\Big)\geq \frac{1}{8M}(r-M)^2$ for $r$ sufficiently close to horizon, but away from it.
We deduce that for $J=0$  and  for $r\geq r_e>M$
  \bea\label{eq:estimate1}
  \begin{split}
   |q|^2 \D^\mu \PP_\mu^{(X, w, J=0)}[\psi] &\gtrsim\dfrac{1}{r}\left(1-\dfrac{M}{r}\right)^2 |\pr_r\psi|^2 + \dfrac{1}{r}\left(1-\dfrac{r_{trap}}{r}\right)^2 |q|^2 |\nab \psi|^2+\frac{1}{r^2}1_{r>r_*} |\psi|^2.
   \end{split}
   \eea

The main issue with the estimate \eqref{eq:estimate1} is the vanishing of the coefficient for the zero-th order for $r\leq r_{*}$. We will now fix this issue with a Hardy inequality adapted to the extremal case.

\subsection{The global Hardy inequality and trapped control of the time derivative}\label{subs:hardy}

Here we make use of the one-form $J$ to obtain positivity of the zero-th order term in the entire exterior region. From \eqref{eq:bounds-AA} and since for $r>M$ the function $\frac{r(r-M)^4}{(r^2+M^2)^3} $ achieves its maximum at $(3+2\sqrt{2})M>r_*=(2+\sqrt{3})M$, we define for any $r_{e} \in (M, r_{*})$ the following minimum $c_1:=\min\limits_{r\in[r_e,r_* ]}\mathcal{A}(r)$. Observe that because of the bound \eqref{eq:bounds-AA}, we have that $c_1>0$, with $c_1 \downarrow 0$ as $r_e \to M$. Then for $r \in [r_e, r_{*}]$ we can use the bound $\AA \geq c_1$ and \eqref{eq:coefficient-J} to obtain
  \beaa
\AA |\pr_r\psi|^2 +\frac 1 4|q|^2 \div(J |\psi|^2)&=&\mathcal{A}|\partial_r \psi |^2+\frac{1}{4}|q|^2\left(2v\psi\partial_r\psi  +\left(\partial_r v+\frac{2r}{|q|^2}v \right)|\psi|^2\right)\\
&=& \mathcal{A}\left(\partial_r \psi +\frac{|q|^2}{4\mathcal{A}}v\psi \right)^2-\frac{|q|^4v^2}{16\mathcal{A}} |\psi|^2+\frac{1}{4}|q|^2\left(\partial_r v+\frac{2r}{|q|^2}v \right)|\psi|^2\\
&\geq&\frac{1}{4}|q|^2\left(\partial_r v+\frac{2r}{|q|^2}v - \frac{|q|^2v^2}{4\mathcal{A}}\right)|\psi|^2\\
&\geq&\frac{1}{4}|q|^2\left(\partial_r v+\frac{2r}{|q|^2}v - \frac{|q|^2v^2}{4c_1}\right)|\psi|^2\\
&=&\frac{c_1}{4}|q|^2\left(\left(\frac{v}{c_1}\right)'+\frac{2r}{|q|^2}\left(\frac{v}{c_1}\right)-\frac{|q|^2}{4}\left(\frac{v}{c_1}\right)^2 \right)|\psi|^2. 
   \eeaa
   We want to find a function $y(r)$ that for $r \in [r_e, r_{*}]$
   \beaa
   	y'+\frac{2r}{|q|^2}y-\frac{|q|^2}{4}y^2>0.
   \eeaa
   Using that  $r^2\leq |q|^2\leq r^2+M^2$ we observe that for a negative function $y(r)< 0$ we can bound
   \beaa
   y'+\frac{2r}{|q|^2}y-\frac{|q|^2}{4}y^2\geq y'+\frac{2}{r}y-\frac{r^2+M^2}{4}y^2.
   \eeaa
   In particular we will look for a negative function in $r \in [r_e, r_{*}]$ satisfying $y'+\frac{2}{r}y-\frac{r^2+M^2}{4}y^2>0$.
   A straightforward computation shows that $y_0(r)=-\frac{4}{r(r+M)(r-M)}$ is a negative solution in $r \in [r_e, r_{*}]$ to the ODE $y_0'+\frac{2}{r}y_0-\frac{r^2+M^2}{4}y_0^2=0$. Let $y(r)=\lambda y_0(r)$ be a rescaling of $y_0$ for any constant $0< \lambda <1$, then
   \beaa
   y'+\frac{2}{r}y-\frac{r^2+M^2}{4}y^2&=& \lambda \big(  y_0 +\frac{2}{r} y_0-\frac{r^2+M^2}{4}y_0^2 \big) +\lambda(1-\lambda)\frac{r^2+M^2}{4}y_0^2\\
   &=&\lambda(1-\lambda)\frac{r^2+M^2}{4}y_0^2>0.
   \eeaa
In particular, for $y(r)=\frac 1 2 y_0(r)=-\frac{2}{r(r+M)(r-M)}<0$, we have  in $r \in [r_e, r_{*}]$
\beaa
  	y'+\frac{2r}{|q|^2}y-\frac{|q|^2}{4}y^2&\geq& \frac{1}{2} (1-\frac{1}{2})\frac{r^2+M^2}{4}y_0^2= \frac{r^2+M^2}{r^2(r+M)^2(r-M)^2},
\eeaa
and therefore for 
\bea\label{eq:def-v}
v(r)=c_1 y(r)=-\frac{2c_1}{r(r+M)(r-M)},
\eea
 we have
     \beaa
\AA |\pr_r\psi|^2 +\frac 1 4|q|^2 \div(J |\psi|^2)&\geq &\frac{c_1}{4}\frac{r^2+M^2}{(r+M)^2(r-M)^2}|\psi|^2. 
   \eeaa
   To conclude, combining the above Hardy inequality with the bound \eqref{eq:estimate1} we can improve it to
     \bea\label{eq:estimate-after-Hardy}
   |q|^2 \D^\mu \PP_\mu^{(X, w, J)}[\psi] &\gtrsim & \frac{1}{r}\left(1-\frac{M}{r}\right)^2 |\pr_r\psi|^2  + \frac{1}{r}\left(1-\dfrac{r_{trap}}{r}\right)^2 |q|^2 |\nab \psi|^2+\frac{1}{r^2}1_{r>r_e} |\psi|^2,
   \eea
   for $r \geq r_e>M$.

   \begin{remark} Observe that the above choices of functions can be easily adapted to the case of extremal Reissner-Nordstr\"om as well. It is easy to check that the choices of $z=\frac{(r-M)^2}{r^4}$, $u=\int_{r_{trap}}^r\frac{s^4}{(s-M)^2}w(s) ds$ with $w$ given by $w=\frac{8}{27M}$ for $r\leq r_*=3M$ and $w=\frac{2(r-M)^2}{r^3}$ for $r>3M$ yield \eqref{eq:estimate1}. Similarly, a global Hardy inequality with the choice of function $v=-\frac{2c_1}{r^2(r-M)}$ implies \eqref{eq:estimate-after-Hardy}.
In particular, this gives an alternative simplified proof of the Morawetz estimates in extremal Reissner-Nordstr\"om, where an angular frequency decomposition, as used in \cite{extremal-1}, is unnecessary. See also \cite{HMV}.
   \end{remark}

   The only term that is missing from the above right hand side to give the integral appearing in \eqref{eq:main-theorem} of Theorem \ref{main-theorem} is the trapped control on the time derivative. 
   For a function of $r$ $w_T$, we have from \eqref{le:divergPP-gen}
   \beaa
 |q|^2 \D^\mu \PP_\mu^{(X=0, w_T, J=0)}[\psi]&=& -\frac 1 4 |q|^2\square_\g w_T |\psi|^2+\frac 12  w_T |q|^2(\pr_\lambda \psi \pr^\lambda \psi)\\
 &=& -\frac 1 4 |q|^2\square_\g w_T |\psi|^2+\frac 12  w_T\big(\Delta |\pr_r \psi|^2+\frac{1}{\De} \RR^{\a\b}\pr_\a \psi \pr_\b \psi \big)\\
  &=&-\frac 12  w_T\frac{(r^2+M^2)^2}{(r-M)^2}|\partial_t \psi|^2+\frac 12  w_T(r-M)^2 |\pr_r \psi|^2+ \frac 12  w_T O^{\a\b}\pr_\a \psi \pr_\b \psi \\
  && -\frac 1 4 |q|^2\square_\g w_T |\psi|^2.
  \eeaa
  We choose $w_T$ to be given by
   \beaa
   	w_T=-\frac{(r-M)^2(r-r_{trap})^2}{r^7},
   \eeaa
   and we have
      \bea\label{eq:estimate-wT}
      \begin{split}
 |q|^2 \D^\mu \PP_\mu^{(X=0, w_T, J=0)}[\psi]  &=\frac 12  \frac{(r-r_{trap})^2(r^2+M^2)^2}{r^7}|\partial_t \psi|^2-\frac 12  \frac{(r-M)^4(r-r_{trap})^2}{r^7} |\pr_r \psi|^2\\
 &- \frac 12 \frac{(r-M)^2(r-r_{trap})^2}{r^7} O^{\a\b}\pr_\a \psi \pr_\b \psi  -\frac 1 4 |q|^2\square_\g w_T |\psi|^2.
 \end{split}
  \eea
 We explicitly compute 
   \beaa
   -\frac 1 4 |q|^2\square_\g w_T&=&   -\frac 1 4 \pr_r ( (r-M)^2 \partial_r w_T)\\
   &=& \frac{(r-M)^2}{2r^9}\Big[3 r^4-3 \left(9+4 \sqrt{2}\right)M r^3+\left(93+68 \sqrt{2}\right)M^2 r^2\\
     &&-7 \left(21+16 \sqrt{2}\right) M^3 r+(56 \sqrt{2}+84)M^4\Big]\\
     &=& \frac{3(r-M)^2}{2r^9}\big(r-x_1M \big)\big(r-x_2M \big)\big(r-x_3M \big)\big(r-x_4M \big),
     \eeaa
     where $1<x_1<x_2<x_3<x_4$ are four roots of 
\beaa
	3 x^4-3 \left(9+4 \sqrt{2}\right) x^3+\left(93+68 \sqrt{2}\right) x^2-7 \left(21+16 \sqrt{2}\right) x+56 \sqrt{2}+84=0.
\eeaa
Even though $-\frac 1 4 |q|^2\square_\g w_T$ can be negative for $r \in [x_1M, x_4M]$, it must have a finite negative lower bound there. In particular, by choosing $r_e \in (M, \min(x_1M, r_{*}))$, there exists a sufficiently small $\delta_T>0$ such that 
\beaa
\frac{1}{r^2} 1_{r>r_e}  -\frac 1 4\delta_T  |q|^2\square_\g w_T \geq \frac{1}{r^2}\left(1-\dfrac{M}{r}\right)^2.
\eeaa

Finally combining \eqref{eq:estimate-after-Hardy} and \eqref{eq:estimate-wT} we deduce for $r\geq r_e>M$
\beaa
 \D^\mu \PP_\mu^{(X, w+\delta_Tw_T, J)}[\psi] &\gtrsim & \frac{1}{r^3}\left(1-\frac{M}{r}\right)^2|\partial_r \psi|^2+ \frac{1}{r}\left(1-\frac{r_{trap}}{r}\right)^2\Big( \frac{1}{r^2}(\partial_t \psi)^2+ |\nab \psi|^2\Big)  \\
	&&+\frac{1}{r^4}\left(1-\dfrac{M}{r}\right)^2|\psi|^2.
\eeaa

We are finally set to apply the divergence theorem to the current $\PP_\mu^{(X, w+\delta_Tw_T, J)}[\psi]$. Observe that, by making use of the simple Hardy estimate
\beaa
\int_{0}^\infty |\psi|^2 dx \lesssim \int_0^\infty x^2 |\partial_x \psi|^2 dx
\eeaa
with $x=r-r_{+}$, see also \cite{And-Mor}, to obtain bounds for the zero-th order term, one can estimate the boundary terms $\int_{\Sigma_\tau}\PP_\mu^{(X, w+\delta_Tw_T, J)}[\psi] n^\mu_{\Sigma_\tau}$ by a large constant times a positive definite energy current, such as
\beaa
E^{(T)}[\psi](\tau)&=& \int_{\Sigma_\tau} |T\psi|^2+ \big( 1-\frac{M}{r}\big)^2 |\partial_r \psi|^2 + |\nabb\psi|^2.
\eeaa
We therefore deduce in the region $r \geq r_e>M$
   \beaa
  &&\int_{0}^{\tau} \int \frac{1}{r^3}\left(1-\frac{M}{r}\right)^2|\partial_r \psi|^2+ \frac{1}{r}\left(1-\frac{r_{trap}}{r}\right)^2\big(\frac{1}{r^2} (\partial_t \psi)^2+ |\nabb \psi|^2\big) +\frac{1}{r^4}\left(1-\dfrac{M}{r}\right)^2|\psi|^2  \\
   &\leq& C \Big( E^{(T)}[\psi](\tau)+\int_{\mathcal{H}^+(0, \tau)} \PP^{(T)}_\mu[\psi] n_{\mathcal{H}^+}^\mu+E^{(T)}[\psi](0) \Big).
   \eeaa
Using the boundedness of the energy statement given in \eqref{eq:energy-boundedness} we conclude the proof of Theorem \ref{main-theorem}.

\appendix

\section{Weighted estimates on the horizon}\label{sec:appendix}

 We first collect some general formulas in Kerr spacetime. From the wave equation in \eqref{square-GKS} we deduce
\bea\label{square-GKS-r*}
\frac{\Delta}{(r^2+a^2)^2}|q|^2\square_{\g_{M,a}}&=&\partial^2_{r^\star} +\frac{2r\Delta}{(r^2+a^2)^2}\partial_{r^\star} - \Big( \pr_t+\frac{a}{r^2+a^2} \pr_\phi\Big)^2+\frac{\Delta}{(r^2+a^2)^2}\OO,
\eea
where $\OO=\frac{1}{\sin\th} \pr_\th(\sin\th\pr_\th)+\frac{1}{\sin^2\th} \pr^2_\vphi +2a \partial_t\partial_\vphi+a^2\sin^2\th\pr^2_t$ and we denote  $\partial_{r^\star}=\frac{\Delta}{r^2+a^2}\partial_r$. Observe that the modified Laplacian $\OO$ is related to the Carter operator in Kerr \cite{Giorgi9}.

We define the radiation field associated to $\psi$ as $\Phi := (\sqrt{r^2+a^2}) \psi$, for which we have
\beaa
|q|^2\square_{\g_{M,a}} \Phi &=&\pr_r(\Delta \pr_r (\sqrt{r^2+a^2}) \psi)+\pr_r(\sqrt{r^2+a^2} )\Delta \pr_r (\psi) +\sqrt{r^2+a^2}|q|^2 \square_\g \psi \\
&=&\big( \frac{r \pr_r\Delta}{ r^2+a^2} + \frac{(-2r^2+a^2) \Delta}{(r^2+a^2)^{2}}\big) \Phi+\frac{2r\Delta }{r^2+a^2}  \pr_r \Phi +\sqrt{r^2+a^2}|q|^2 \square_\g \psi.
\eeaa
Using \eqref{square-GKS-r*} for the left hand side of the above, we obtain
\beaa
 \partial^2_{r^\star} \Phi&=&  \Big( \pr_t+\frac{a}{r^2+a^2} \pr_\phi\Big)^2\Phi-\frac{\Delta}{(r^2+a^2)^2}\OO(\Phi)+V \Phi+\frac{\Delta}{(r^2+a^2)^{3/2}}|q|^2 \square_\g \psi,
\eeaa
where $V$ is given by
\beaa
V&:=& \frac{\Delta}{(r^2+a^2)^2}\big( \frac{r \pr_r\Delta}{ r^2+a^2} + \frac{(-2r^2+a^2) \Delta}{(r^2+a^2)^{2}}\big).
\eeaa

Let $\psi$ be a solution to $\square_g\psi=0$ and $\Phi$ its associated radiation field. Then
\bea\label{eq:wave-radiation-field-2}
\Big( \pr_t+\frac{a}{r^2+a^2} \pr_\phi+\partial_{r^\star}\Big)\Big( \pr_t+\frac{a}{r^2+a^2} \pr_\phi-\partial_{r^\star}\Big)\Phi&=& \frac{\Delta}{(r^2+a^2)^2}\OO(\Phi)+\partial_{r^\star}\big( \frac{a}{r^2+a^2}\big) \pr_\vphi \Phi-V \Phi.
\eea
By defining $\Lb=\frac 1 2\big(\pr_t+\frac{a}{r^2+a^2} \pr_\phi-\partial_{r^\star}\big)$ and $L=\frac 1 2 \big( \pr_t+\frac{a}{r^2+a^2} \pr_\phi+\partial_{r^\star} \big)$, we have
\bea\label{eq:wave-radiation-field-3}
4 L  \Lb \Phi&=& \frac{\Delta}{(r^2+a^2)^2}\OO(\Phi)+\partial_{r^\star}\big( \frac{a}{r^2+a^2}\big) \pr_\phi \Phi-V \Phi.
\eea
For axially symmetric solutions to the wave equation in extremal Kerr, the above reduces to 
\bea\label{eq:radiation-field-final}
4 L \Lb \Phi&=& \frac{(r-M)^2}{(r^2+M^2)^2}\OO(\Phi )+O((r-M)^3)\Phi.
\eea
for $\OO=\frac{1}{\sin\th} \pr_\th(\sin\th\pr_\th)+M^2\sin^2\th\pr^2_t$.

Here we obtain improved weighted estimates on the event horizon by adapting the ones obtained in Section 2.2 of \cite{AAG3} in the case of extremal Reissner-Nordstr\"om. Notice that such improved estimates are decoupled from the degeneracy at trapping and the region close to null infinity since the $(r-M)$-weights are optimized only at the horizon. 

\begin{proposition} Let $\psi$ be a solution to \eqref{eq:equation-axial-symm-extremal} and $\Phi = (\sqrt{r^2+a^2}) \psi$. Then for any $\tau_1, \tau_2$ with $\tau_1<\tau_2$ and any $\delta>0$ small enough we have that
\beaa
E^{(T)}[\psi](\tau_2) +\int_{\AA(\tau_1, \tau_2)} \Big( (r-M)^{1+\delta} |L\Phi|^2 +(r-M)^{1+\delta} |\Lb\Phi|^2+(r-M)^{3}\big(  |\nabb\Phi|^2+|\partial_t \Phi|^2 \big)\Big) \lesssim E^{(T)}[\psi](\tau_1),
\eeaa
where $\AA(\tau_1, \tau_2)=\cup_{\tau \in [\tau_1, \tau_2]} \mathcal{N}_\tau$ and $\mathcal{N}_\tau:=\Sigma_\tau \cap \{ M \leq r \leq r_0 < (1+\sqrt{2})M\}$ are null hypersurfaces that cross the event horizon but do not intersect the photon sphere.
\end{proposition}
\begin{proof} As in Proposition 2.4 in \cite{AAG2}, we first improve the weight of the $\Lb$ derivative. We define the function 
\beaa
f(r)=-\frac{1}{\eta\c [\log (r-M)^{-1}]^\eta}
\eeaa
for some small enough $\eta>0$. We integrate by parts the integral $ 2\int_{\mathcal A(\tau_1,\tau_1)}e^f (L\Lb\Phi )\c (\Lb \Phi)$ and use the equation \eqref{eq:radiation-field-final} for the radiation field $\Phi$, and obtain
\beaa
    &&\int_{\mathcal N_{\tau_2}} e^f |\Lb\Phi |^2 +\int_{\AA(\tau_1, \tau_2)}\frac{(r-M)\c e^f}{2(r^2+M^2)[\log (r-M)^{-1}]^{1+\eta}}|\Lb\Phi |^2 \\
      &=&\int_{\mathcal N_{\tau_1}} e^f |\Lb\Phi |^2+ 2\int_{\mathcal A(\tau_1,\tau_2)}e^f (L\Lb\Phi )\c (\Lb \Phi)\\
      &=&\int_{\mathcal N_{\tau_1}} e^f |\Lb\Phi |^2+ \frac 1 2\int_{\mathcal A(\tau_1,\tau_2)}e^f \Big( \frac{(r-M)^2}{(r^2+M^2)^2}\OO(\Phi )+O((r-M)^3)\Phi \Big)\c (\Lb \Phi)  
\eeaa
Integrating by parts in $\nabb_{\mathbb{S}^2}$, $\partial_t$ and $\Lb$, we obtain
\beaa
    &&\int_{\mathcal N_{\tau_2}} e^f |\Lb\Phi |^2 +\int_{\AA(\tau_1, \tau_2)}\frac{(r-M)\c e^f}{2(r^2+M^2)[\log (r-M)^{-1}]^{1+\eta}}|\Lb\Phi |^2 \\
      &=&\int_{\mathcal N_{\tau_1}} e^f |\Lb\Phi |^2+  \frac 1 4\int_{\mathcal A(\tau_1,\tau_2)} \Lb \big(e^f  \frac{(r-M)^2}{(r^2+M^2)^2}\big)\big( |\nabb_{\mathbb{S}^2}\Phi |^2+M^2\sin^2\th |\partial_t \Phi|^2\big)\\
      &&+ \frac 1 4\int   e^f  \frac{(r-M)^2}{(r^2+M^2)^2}\big( |\nabb_{\mathbb{S}^2}\Phi |^2+M^2\sin^2\th |\partial_t \Phi|^2 \big) \Big|_{r=r_0}\\
      &&+ \frac 1 2\int_{\mathcal A(\tau_1,\tau_2)}\partial_t \Big(e^f  \frac{(r-M)^2M^2\sin^2\th}{(r^2+M^2)^2}\pr_t\Phi  \c \Lb \Phi\Big) +\int_{\mathcal A(\tau_1,\tau_2)} O((r-M)^3)e^f  \Phi \c (\Lb \Phi) . 
\eeaa
The term with angular and time derivatives on $r=r_0$ can be bounded by the Morawetz bulk of Theorem 
\ref{main-theorem}. The first term on the last line can be bounded through Cauchy-Schwarz by the energy at the initial time thanks to the boundedness of the energy in \eqref{eq:energy-boundedness}. The last term can be handled by Cauchy-Schwarz using the zero-th order term of the Morawetz estimates of Theorem 
\ref{main-theorem}. 
Finally, observe that 
\beaa
-2\Lb \big(e^f  \frac{(r-M)^2}{(r^2+M^2)^2}\big)&=&e^f \pr_r( \frac{(r-M)^2}{(r^2+M^2)}) \frac{(r-M)^2}{(r^2+M^2)^2}+ e^f f' \frac{(r-M)^4}{(r^2+M^2)^3}+e^f  \frac{(r-M)^4}{(r^2+M^2)^2} \pr_r(\frac{1}{r^2+M^2}) \\
&=&e^f  \frac{2M(r+M)}{(r^2+M^2)^4} (r-M)^3- e^f \frac{1}{ [\log (r-M)^{-1}]^{\eta+1}} \frac{(r-M)^3}{(r^2+M^2)^3}-e^f  \frac{2r(r-M)^4}{(r^2+M^2)^4},
\eeaa
where the first term above dominates near the horizon. Using that in the integrated region $c \leq e^f \leq C$ for some constants $c$, $C$ and bounding
\beaa
 \frac 1 4\int_{\mathcal A(\tau_1,\tau_2)} \Lb \big(e^f  \frac{(r-M)^2}{(r^2+M^2)^2}\big)\big( |\nabb_{\mathbb{S}^2}\Phi |^2+M^2\sin^2\th |\partial_t \Phi|^2\big) \lesssim - \int_{\mathcal A(\tau_1,\tau_2)}(r-M)^3\big( |\nabb\Phi |^2+ |\partial_t \Phi|^2\big) 
\eeaa
we obtain
\beaa
    &&\int_{\mathcal N_{\tau_2}} e^f |\Lb\Phi |^2 +\int_{\AA(\tau_1, \tau_2)}(r-M)^{1+\delta}|\Lb\Phi |^2+ (r-M)^3 \big( |\nabb \Phi |^2+ |\partial_t \Phi|^2\big) \\
      &\lesssim&\int_{\mathcal N_{\tau_1}} e^f |\Lb\Phi |^2+E^{(T)}[\psi](\tau_1)
\eeaa
where for any $\delta>0$ we use that $(r-M)^\delta \lesssim \frac{1}{[\log (r-M)^{-1}]^{1+\eta}}$. 

Similarly, as in Proposition 2.4 in \cite{AAG2}, integrating by parts the integral $ -\int_{\mathcal A(\tau_1,\tau_1)} \Lb \Big((r-M)^\delta |L \Phi|^2\Big) $ and using the equation we obtain the improved weight for the $L$ derivative. Combining with the above, this concludes the proof.
\end{proof}

{\small

}

\end{document}